\documentclass{amsart}
\usepackage{amsmath, amsfonts, amssymb, url, mathtools, verbatim}



\newcommand{\mA}{\mathcal{A}}

\newcommand{\mC}{\mathcal{C}}

\newcommand{\mE}{\mathcal{E}}

\newcommand{\mJ}{\mathcal{J}}
\newcommand{\mK}{\mathcal{K}}
\newcommand{\mL}{\mathcal{L}}

\newcommand{\mS}{\mathcal{S}}


\newcommand{\fm}{\mathfrak{m}}

\newcommand{\fP}{\mathfrak{P}}


\newcommand{\bfF}{\mathbf{F}}
\newcommand{\bfG}{\mathbf{G}}

\newcommand{\bfQ}{\mathbf{Q}}

\newcommand{\bfT}{\mathbf{T}}

\newcommand{\bfZ}{\mathbf{Z}}






\newcommand{\ov}{\overline}


\newcommand{\be}{\begin{equation}}
\newcommand{\ee}{\end{equation}}
\newcommand{\bes}{\begin{equation*}}
\newcommand{\ees}{\end{equation*}}

\newcommand{\bs}{\begin{split}}
\newcommand{\es}{\end{split}}
\newcommand{\bss}{\begin{split*}}
\newcommand{\ess}{\end{split*}}

\newcommand{\bmat}{\left[ \begin{matrix}}
\newcommand{\emat}{\end{matrix} \right]}
\newcommand{\bsmat}{\left[ \begin{smallmatrix}}
\newcommand{\esmat}{\end{smallmatrix} \right]}

\newcommand{\bml}{\begin{multline}}
\newcommand{\eml}{\end{multline}}
\newcommand{\bmls}{\begin{multline*}}
\newcommand{\emls}{\end{multline*}}


\newcommand{\new}{\mathrm{new}}


\DeclareMathOperator{\Cl}{Cl}

\DeclareMathOperator{\Frob}{Frob}
\DeclareMathOperator{\Gal}{Gal}
\DeclareMathOperator{\GL}{GL}

\DeclareMathOperator{\Hom}{Hom}

\DeclareMathOperator{\Tr}{Tr}

\DeclareMathOperator{\val}{val}







\newcommand{\tr}{\textup{tr}\hspace{2pt}}

\newcommand{\T}{\mathbf{T}}

\usepackage[all]{xy}
\usepackage{amsthm}
\usepackage{amssymb, amsfonts}
\SelectTips{cm}{10}\UseTips
\bibliographystyle{amsplain}

\theoremstyle{plain}
\newtheorem{thm}{Theorem}
\newtheorem{prop}[thm]{Proposition}
\newtheorem{cor}[thm]{Corollary}
\newtheorem{lemma}[thm]{Lemma}

\theoremstyle{definition}

\newtheorem{example}[thm]{Example}
\newtheorem{rem}[thm]{Remark}

\numberwithin{thm}{section}
\numberwithin{equation}{section}

\bibliographystyle{amsplain}

\author{Krzysztof Klosin}
\address{Department of Mathematics, 
	Queens College, 
	City University of New York, 
	65-30 Kissena Blvd
	Flushing, NY 11367, USA}
\email{kklosin@qc.cuny.edu} 

\author{Mihran Papikian}
\address{Department of Mathematics, Pennsylvania State University, University Park, PA 16802, USA}
\email{papikian@psu.edu}

\begin{document}
\title{Galois Extensions and a Conjecture of Ogg}
\thanks{The first author's research was supported by a Collaboration for Mathematicians Grant \#578231  from the Simons Foundation
 and by a PSC-CUNY research award jointly funded by the Professional Staff Congress and the City
University of New York.}

\subjclass[2010]{11G18}

\begin{abstract} 
	Let  $N=pq$ be a product of two distinct primes.  
	There is an isogeny $J_0(N)^\new\to J^N$ defined over $\bfQ$ between the new quotient of 
	$J_0(N)$ and the Jacobian of the Shimura curve attached to the indefinite quaternion algebra of discriminant $N$. 
	In the case when $p=2,3,5,7,13$, Ogg made predictions about the kernels of these isogenies. 
	We show that Ogg's conjecture is not true in general. Afterwards, we 
	propose a strategy for proving results toward Ogg's conjecture in certain situations. Finally,  
	we discuss this strategy in detail for $N=5\cdot 13$. 
\end{abstract}

\date{\today}

\maketitle

\section{Introduction}\label{Intro}  

\subsection{Ogg's conjecture} Let $N$ be a product of an even number of distinct primes. Let $J_0(N)$ 
be the Jacobian of the modular curve $X_0(N)$. In \cite{Ribet80}, 
Ribet proved the existence of an isogeny defined over $\bfQ$ between the ``new'' 
part $J_0(N)^\new$ of $J_0(N)$ and the Jacobian $J^N$ 
of the Shimura curve $X^N$ attached to a maximal order in the indefinite quaternion algebra over $\bfQ$ of discriminant $N$. 
The proof proceeds by showing that the $\bfQ_\ell$-adic Tate modules of $J_0(N)^\new$ and $J^N$ are 
isomorphic as $\Gal(\overline{\bfQ}/\bfQ)$-modules, which is a 
consequence of a correspondence between automorphic forms on $\GL(2)$ and automorphic forms on the multiplicative group of a 
quaternion algebra. The existence of the isogeny $J_0(N)^\new\to J^N$ defined 
over $\bfQ$ then follows from a special case of Tate's isogeny conjecture for abelian varieties over number fields,  
also proved in \cite{Ribet80} (the general case of Tate's conjecture was proved a few years later by Faltings). 
Unfortunately, this argument provides no information about the isogenies $J_0(N)^\new\to J^N$ 
beyond their existence. 

In \cite{Ogg85}, Ogg made explicit predictions about the kernel of Ribet's 
isogeny when $N=pq$ is a product of two distinct primes and $p=2,3,5,7,13$. 
In this case, $J_0(N)^\new$ 
is the quotient of $J_0(N)$  by the subvariety generated by the images of $J_0(q)$ in $J_0(N)$ under the maps 
induced by the two degeneracy morphisms $X_0(pq)\rightrightarrows X_0(q)$ (note that $J_0(p)=0$). 
Let $\mC$ be the cuspidal divisor group of $J_0(N)$, which is well-known to be a finite abelian subgroup of $J_0(N)(\bfQ)$; we 
refer to \cite{ChuaLing97} for a complete description of $\mC$.  Let 
$\overline{\mC}$ be the image of $\mC$ in $J_0(N)^\new$.  Denote 
$$
M= \text{numerator of }(q+1)/12.
$$
Ogg's conjecture predicts that there is an isogeny 
$J_0(N)^\new\to J^N$ whose kernel $\mK$ is a subgroup of $\overline{\mC}$ such that 
\begin{align}\label{eqOggConj}
\nonumber	\mK &\cong \bfZ/M \quad \text{for} \quad p=2,3,5, \\ 
	\mK &\cong \bfZ/2M \quad \text{for} \quad p=7,  \\
\nonumber	\mK &\cong \bfZ/7\oplus \bfZ/M \quad \text{for}\quad p=13. 
\end{align}

The underlying idea behind Ogg's conjecture is to compare the component groups of the N\'eron models 
of $J_0(N)$ and $J^N$ at $q$, which provides some reasonable guesses for the kernels of Ribet's isogenies. In fact, Ogg imposes the 
restriction $p=2,3,5,7,13$ to be able to carry out the necessary calculations. We briefly sketch Ogg's reasoning. 
For simplicity, we ignore the $2$ and $3$-primary torsion of the groups involved in the discussion, and also the case $p=13$ 
where $\mK$ might not be cyclic. 
The component groups of $J_0(N)$ for square-free $N$ 
are relatively easy to describe; cf. \cite[Appendix]{Mazur78}. 
On the other hand, although the component groups of $J^N$ can be computed for a given $N$ by combining 
a classical method of Raynaud  with a result of Cherednik and Drinfeld about the reduction of $X^N$ at $q$, 
these groups do not exhibit any regular patterns so cannot be described using only the prime decomposition of $N$ (as is the case 
for $J_0(N)$). One exception is the case when $N=pq$ and  $p=2,3,5,7,13$. In this case (and only in this case), 
the dual graph of the special fibre of the Cherednik-Drinfeld model of $X^N$ at $q$ has two vertices, so the component group 
is easy to compute and turns out to be cyclic of order $(q+1)$. The 
component group $\Phi_q$ of $J_0(N)$ at $q$ is cyclic of order $(q-1)$. 
Next, Ogg considers the canonical specialization $\mC\to \Phi_q$, and shows that the ``old'' part of $\mC$ arising from the 
cuspidal divisor group of $J_0(q)$ maps surjectively onto $\Phi_q$, whereas a specific ``new'' cuspidal divisor $D$ of order $q+1$ 
maps to $0$ in $\Phi_q$.  Then the kernel $\mK$ in \eqref{eqOggConj} is predicted to be generated by the image of $D$ in $J_0(N)^\new$. 
The fact that $J_0(N)^\new$ and $J^N$ have purely toric reduction at $q$ is implicitly used in this last step. 
(Given an abelian variety $A$ over a local field $K$ with purely toric reduction and a 
finite constant subgroup $H\subset A(K)$, it is possible to describe the component group of $A/H$ in terms 
of the component group $\Phi_A$ of $A$ and the kernel/image of the canonical specialization $H\to \Phi_A$; 
cf. \cite[Thm. 4.3]{PapikianJNT2011}.)

Let $\bfT\subset \mathrm{End}(J_0(N))$ be the Hecke algebra generated over $\bfZ$ by all Hecke correspondences $T_\ell$ 
with prime indices  (including those that divide $N$). The ring ${\bfT}$ also acts on $J_0(N)^\new$ and $J^N$ (cf. \cite{Ribet90}), and 
it is implicit in \cite{Ribet80} that there is an isogeny $J_0(N)^\new\to J^N$ over $\bfQ$ which is ${\bfT}$-equivariant (cf. \cite[Cor. 2.4]{Helm07}). 
Since the cuspidal divisor group $\mC$ is annihilated by the Eisenstein ideal of $\bfT$, Ogg's conjecture 
implies that, in the case when $N=pq$ and $p=2,3,5,7,13$, there is an isogeny $J_0(N)^\new\to J^N$ whose 
kernel is supported on the (new) Eisenstein maximal ideals. 
(The Eisenstein ideal $\mE$ of $\bfT$ is the ideal generated by all $T_\ell-(\ell+1)$ for primes $\ell \nmid N$; 
the Eisenstein maximal ideals are the maximal ideals containing $\mE$.) 

In \cite{Ribet90Israel}, Ribet proved a theorem which implies that 
the support of the kernel of a ${\bfT}$-equivariant isogeny $J_0(pq)^\new\to J^{pq}$ must, in general, contain maximal 
ideals of ${\bfT}$ which are not Eisenstein, so any construction of such an isogeny must be relatively elaborate. He then  
gave a concrete example with $p=11, q=193$  where this phenomenon occurs.  
Next, we show that Ribet's construction can be carried out also in some cases when $p=2,3,5,7,13$; 
thus Ogg's conjecture \eqref{eqOggConj} is not true in general\footnote{To be fair, Ogg writes in his paper \cite[p. 213]{Ogg85} 
	``On devine (deviner est plus faible que conjecturer) donc qu'on peut prendre $\mK$ comme noyau de l'isog\'enie, avec 
	confiance si la partie ancienne est triviale, i.e. si $(q-1)\mid 12$." Hence, perhaps, we should have called \eqref{eqOggConj} 
``Ogg's guess''.}. 

\begin{example} 
	Let $E$ be the elliptic curve over $\bfQ$ defined by the equation $$y^2+y=x^3-x^2-2x+1.$$ This is the 
	unique, up to isomorphism, elliptic curve of conductor $q=701$ (which is a prime); cf. \cite{Cremona97}. 
	In particular, $E$ has no cyclic isogenies defined over $\bfQ$, so $E[3]$ is an irreducible 
	$\mathrm{Gal}(\overline{\bfQ}/\bfQ)$-module. Let $\rho: \mathrm{Gal}(\overline{\bfQ}/\bfQ) \to \mathrm{Aut}(E[3])$ 
	be the corresponding Galois representation.  
	Put $p=7$. By \cite[Thm. 1]{Ribet90Israel}, there is a maximal ideal $\fm\lhd \bfT$ of residue characteristic $3$ such that 
	the kernel $J_0(pq)[\fm]$ of $\fm$ on $J_0(pq)(\overline{\bfQ})$ defines a representation equivalent to $\rho$. 
	One easily checks either by hand, or with the help of \texttt{Magma}, that $E(\bfF_p)\cong \bfZ/3\times \bfZ/3$. This implies that 
$\rho(\mathrm{Frob}_p)=1$. In particular, 
$$
\Tr(\rho(\mathrm{Frob}_p))=2\equiv (p+1)\mod \fm. 
$$
As is explained in \cite{Ribet90Israel}, the above congruence implies that $\fm$ is new. By Theorem 2 in \cite{Ribet90Israel}, 
$\dim_{\bfF_3}J_0(pq)^\new[\fm]=2$. 
On the other hand, since $\rho$ is unramified at $p$, Theorem 3 in \cite{Ribet90Israel} 
applies, so $\dim_{\bfF_3}J^{pq}[\fm]=4$. 
It easily follows from this that the kernel of any ${\bfT}$-equivariant isogeny $J_0(pq)^\new\to J^{pq}$ must have the non-Eisenstein $\fm$ in its support, 
contrary to Ogg's conjecture. 
(Otherwise, by duality, there is a homomorphism $J^{pq} \to J_0(pq)$ with finite kernel whose support does not contain $\fm$.  
This implies that there is an injection $J^{pq}[\fm] \to J_0(pq)[\fm]$, which 
is absurd.)

A similar construction also works for $p=13$ and $q=571$. Let 
$E: y^2+y=x^3+x^2-4x+2$ be the curve $571$ B1 in Cremona's table \cite{Cremona97}. Again, $E[3]$ is irreducible and the corresponding 
Galois representation $\rho$ satisfies $\rho(\mathrm{Frob}_{13})=-1$. Ribet's theorems then imply that $\dim_{\bfF_3}J^{pq}[\fm]=4$ 
and $\dim_{\bfF_3}J_0(pq)^\new[\fm]=2$, from which one obtains a contradiction to \eqref{eqOggConj} as before.  
\end{example}

Despite the fact that Ogg's conjecture is false in general, some cases of the 
conjecture for \textit{small} levels have been proved. The conjecture is easy to verify when 
$J_0(pq)^\new$ and $J^{pq}$ are elliptic curves (there are five such cases). When 
$X^{pq}$ is hyperelliptic of genus $2$ or $3$, Ogg's conjecture is verified in \cite{GonzalezRotger04} and 
\cite{GonzalezMolina16} (there are twelve such cases). The strategy here is 
to explicitly compute and compare the period matrices of $J_0(pq)^\new$ and $J^{pq}$, which itself relies 
on a lengthy calculation of the defining equations of hyperelliptic Shimura curves. 
When $N=5\cdot 13$, Ogg's conjecture is verified in \cite{KlosinPapikian18}, up to $2$-primary torsion supported on 
a maximal Eisenstein ideal. In this case, $X^N$ has genus $5$ and is not hyperelliptic. Our approach in 
\cite{KlosinPapikian18} is completely different from \cite{GonzalezRotger04, GonzalezMolina16} and relies 
on the Hecke equivariance of Ribet isogenies and the fact that  the Hecke algebra of level $65$ is a rather simple ring. 

For general $N=pq$, Yoo \cite{YooBLMS} proved that,  
under certain congruence assumptions on $p$, $q$, and $\ell$,  
the kernel of a Ribet isogeny $J_0(N)^\new\to J^N$ must contain the $\ell$-primary subgroup of the cuspidal divisor group $\overline{\mC}$. 
This result implies that for $p=2,3,5,7,13$ and odd $\ell\geq 5$, $\ker(\pi)$ contains $\mK\otimes\bfZ_\ell$ from \eqref{eqOggConj}, in 
accordance with Ogg's conjecture. 

\subsection{Main result} 
In this article we continue exploring avenues that lead to partial results toward Ogg's conjecture.  
While we again employ the Hecke algebra, we propose a different approach from \cite{KlosinPapikian18} 
which has the advantage of being applicable to larger values of $N$ than $65$. 

Now we outline our approach and state the main results.  To simplify the notation, let $J:=J_0(N)$ and $J':=J^N$. 
Let $S$ denote the finite set of maximal ideals of $\bfT$ that are either Eisenstein, or of residue characteristic $2$ or $3$. 
There is an element $\sigma_S\in \bfT$ such that for any maximal ideal $\fm$ of $\bfT$, one has $\sigma_S \in \fm$ if and only if $\fm \in S$ (cf. Lemma 3.2 in \cite{Helm07}). Set $\bfT_S:= \bfT[\sigma_S^{-1}]$. 

$J^{\rm new}$ and $J'$ have purely toric reduction at the primes $p$ and $q$, and good reduction everywhere else.  
For $A=J^{\rm new}$ or $J'$, denote by $M_p(A)=\Hom(\mA^0_{\ov{\bfF}_p}, \bfG_{m, \ov{\bfF}_p})$ the character group of  $A$ at $p$.   
Here $\mA$ is the N\'eron model of $A$ over $\bfZ_p$, and $\mA_{\bfF_p}^0$ is the connected component 
of the identity of the special fibre of $\mA$ at $p$. The character group $M_p(A)$ 
is a free abelian group of rank equal to $\dim(A)$. We similarly define the character group $M_q(A)$ at $q$. 
By the N\'eron mapping property, $\bfT$ acts on $M_p(A)$ and $M_q(A)$. 

A special case of a result of Helm \cite[Prop. 8.13]{Helm07} implies that there is an isomorphism of $\bfT_S$-modules 
\begin{equation}\label{eqHelm} 
\Hom(J^{\rm new}, J') \cong_{\bfT_S} \Hom(M_q(J'), M_q(J^{\rm new})). 
\end{equation}
On the other hand, a special case of a result of Ribet \cite[Thm. 4.1]{Ribet90} implies that 
\begin{equation}\label{eqRibet} 
M_q(J')\cong_{\bfT} M_p(J^{\rm new}). 
\end{equation}
Since the cuspidal divisor group of $J$ is annihilated by the Eisenstein ideal of $\bfT$, \eqref{eqOggConj} combined with 
\eqref{eqHelm} and \eqref{eqRibet} 
implies that 
\begin{equation}\label{eq1.3}
	M_p(J^{\rm new}) \cong_{\bfT_S} M_q(J^{\rm new}). 
\end{equation} 
Conversely, if \eqref{eq1.3} is true, then \eqref{eqHelm} and \eqref{eqRibet}  imply that there 
is an isogeny $\pi: J^{\rm new} \to J'$ whose kernel is supported on the maximal ideals in $S$.

This offers a natural strategy for proving results toward \eqref{eqOggConj}. First, one needs to prove \eqref{eq1.3}. Since the 
character groups are free $\bfZ$-modules, 
this step involves only  linear algebra  calculations, which may be quite daunting in practice -  but we note here that there exist algorithms that allow one to do this at least in principle; cf. section \ref{Matrices}. The second step comprises classifying isogenies supported on the 
maximal ideals in $S$. This can be achieved by excluding  the existence of certain subgroup schemes in $J[\fm^s]$ for $\fm \in S$, a problem which in \cite{KlosinPapikian18} (for $N=65$) was handled by an ad hoc counting argument.

In this paper we offer a more systematic approach for step 2 based on the non-existence of certain deformations of non-split Galois extensions 
\be \label{ext1} 0 \to \bfZ/{\ell} \to \ov{\rho} \to \mu_{\ell} \to 0,\ee 
where $\ell\geq 5$ is a prime. 
By the results of Ohta and Yoo \cite{Ohta14, Yoo16}, one knows that the residue characteristic of an Eisenstein maximal ideal 
divides either $p \pm 1$ or $q \pm 1$. 
We will assume that $\ell$ satisfies one of the following conditions:
\begin{align}
\label{eql1}\ell\mid (p+1) \text{ and } \ell\nmid (q\pm 1),  \\ 
\label{eql2} \ell\mid (q+1)\text{ and } \ell\nmid (p\pm 1). 
\end{align}
Put $\fm=(T_p+1, T_q-1, \mE, \ell)$ in the first case, and $\fm=(T_p-1, T_q+1, \mE, \ell)$ in the second case. 
Then $\fm$ is a new Eisenstein maximal ideal of residue characteristic $\ell$ and $\dim_{\bfF_{\ell}} J[\fm] = 2$; cf. \cite{Yoo16}, \cite{YooTAMS}. 
 In particular, 
 the action of $G_{\bfQ} = \Gal(\ov{\bfQ}/\bfQ)$ on $J[\fm]$ gives rise to an extension 
 \be \label{ext2} 0 \to \bfZ/{\ell} \to J[\fm] \to \mu_{\ell} \to 0.\ee 
This extension does not split. Indeed, by a theorem of Vatsal \cite{Vatsal05}, the extension \eqref{ext2} splits if and only if $\mu_{\ell} \subset \mS$, where $\mS$ denotes the Shimura subgroup of $J$. Ignoring the $2$ and $3$-primary torsion, one has $\# \mS = (p-1)(q-1)$; cf. \cite{LingOesterle91}.
 Thus for $\ell \nmid (p-1)(q-1)$ we see that $\mu_{\ell} \not\subset \mS$. Hence \eqref{ext2} can in fact be viewed as a non-split extension of Galois modules of the form \eqref{ext1}.  We also note that, ignoring the $2$ and $3$-primary torsion, the cuspidal divisor group $\mC$ of $J$ 
 and the Eisenstein ideal $\mE$ satisfy (cf. \cite{ChuaLing97}, \cite{Ohta14}, \cite{Yoo16})
 $$\bfT/\mE\cong \mC \cong \bfZ/(p-1)(q-1) \oplus \bfZ/(p+1)(q-1) \oplus \bfZ/(p-1)(q+1).$$
 This implies that $\fm$ is the unique Eisenstein maximal ideal of residue characteristic $\ell$ and the 
 constant subgroup scheme of $J[\fm]$ in \eqref{ext2} is $\mC[\ell]$. 
 
In Theorem \ref{main3} (and Corollary \ref{main4}) we prove that under the above assumptions on $\ell$, the Galois representation $\ov{\rho}:= J[\fm]$ does not admit any (non-trivial) reducible (Fontaine-Laffaille) deformations  of determinant $\epsilon$, the $\ell$-adic cyclotomic character (or its mod $\ell^m$ reduction). This allows us to prove the following result, which is the main theorem of the paper.  
 
\begin{thm} \label{thmintro} 
	Assume \eqref{eq1.3} is satisfied, so that there is an isogeny $\pi: J^\new\to J'$ with kernel supported 
	on the maximal ideals in $S$. Assume $\pi$ is chosen to have minimal degree. Let $\ell\geq 5$ be a prime that 
	satisfies either \eqref{eql1} or \eqref{eql2}. Let $\fm\in S$ be the new Eisenstein maximal ideal of residue characteristic $\ell$. 
	Assume further that $J^\new/J^\new[\fm] \cong J^\new$. 
Then the $\ell$-primary part of $\ker \pi$ is contained in $\overline{\mC}[\ell]\cong \bfZ/\ell$.
\end{thm}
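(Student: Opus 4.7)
Let $K \subseteq J^\new$ denote the $\ell$-primary part of $\ker\pi$; since $\pi$ is $\bfT$-equivariant, $K$ is $\bfT$-stable. As $\ker\pi$ is supported on $S$ and $\ell\ge 5$, only Eisenstein maximal ideals of residue characteristic $\ell$ can contribute to $K$; under \eqref{eql1} or \eqref{eql2} the unique such ideal is $\fm$, giving $K\subseteq J^\new[\fm^\infty]$. Minimality of $\pi$ excludes $J^\new[\fm]\subseteq K$: otherwise $\pi$ factors through $J^\new\twoheadrightarrow J^\new/J^\new[\fm]\cong J^\new$ (via the hypothesised isomorphism), yielding an isogeny $J^\new\to J'$ of degree $\deg\pi/\ell^2$ with kernel still supported on $S$, contradicting the minimal choice of $\pi$.

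Since \eqref{ext2} is non-split, $J^\new[\fm]$ has no proper nonzero Galois-stable subgroup except $\ov{\mC}[\ell]$; combined with the previous paragraph, $K\cap J^\new[\fm]\in\{0,\ov{\mC}[\ell]\}$. If $K\ne 0$, the $\fm$-primary structure of $K$ forces this intersection to be nonzero, so $\ov{\mC}[\ell]\subseteq K$. It remains to exclude $K\supsetneq\ov{\mC}[\ell]$.

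Assume $K\supsetneq\ov{\mC}[\ell]$, and let $s\ge 2$ be minimal with $\fm^sK=0$. By the hypothesis $\dim_{\bfF_\ell}J[\fm]=2$ and the Gorenstein property of the completed Hecke algebra $\bfT_\fm$ at the new Eisenstein ideal $\fm$, the $\fm$-adic Tate module $T_\fm J^\new$ is free of rank $2$ over $\bfT_\fm$; the associated Galois representation $\rho:G_\bfQ\to \GL_2(\bfT_\fm)$ is a deformation of $\ov\rho=J[\fm]$ with cyclotomic determinant $\epsilon$ (Weil pairing) satisfying Fontaine-Laffaille at $\ell$ (since $\ell\nmid N$, so $J^\new$ has good reduction at $\ell$). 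Replacing $K$ by $\fm^{s-2}K$ reduces to $s=2$: one checks that $\fm^{s-2}K\subseteq J^\new[\fm^2]$ is Galois-stable with $\fm\cdot(\fm^{s-2}K)=\ov{\mC}[\ell]\subsetneq \fm^{s-2}K$. Within $J^\new[\fm^2]\cong(\bfT_\fm/\fm^2)^2$, the quotient $K/\ov{\mC}[\ell]$ embeds Galois-equivariantly into $J^\new[\fm^2]/J^\new[\fm]\cong \ov\rho$ (because $(K/\ov{\mC}[\ell])\cap \mu_\ell=0$); in either of the two possibilities $K/\ov{\mC}[\ell]\cong \bfZ/\ell$ or $\cong\ov\rho$, one produces a Galois-stable $\bfT_\fm$-cyclic subgroup $K'\subseteq K$ of order $\ell^2$ containing $\ov{\mC}[\ell]$ with $K'/\ov{\mC}[\ell]\cong\bfZ/\ell$: take $K'=K$ in the first case, and the preimage of $\ov{\mC}[\ell]\subseteq\ov\rho$ under $K\twoheadrightarrow K/\ov{\mC}[\ell]$ in the second. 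Such a $K'$ is free of rank $1$ over $\bfT_\fm/\fm^2$ and furnishes a non-trivial reducible Fontaine-Laffaille deformation of $\ov\rho$ of determinant $\epsilon\bmod \ell^2$, contradicting Corollary~\ref{main4}. Hence $K\subseteq\ov{\mC}[\ell]$, proving the theorem.

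The main technical obstacle is the Gorenstein property of $\bfT_\fm$ at the new Eisenstein ideal $\fm$ for level $N=pq$; this is what guarantees both the freeness of $T_\fm J^\new$ over $\bfT_\fm$ and the freeness of $K'$ over $\bfT_\fm/\fm^2$ (as opposed to $K'$ being a non-free extension which would not directly yield a reducible deformation in the sense of Corollary~\ref{main4}). Once this is in place, the cyclotomic-determinant condition (via the Weil pairing) and the Fontaine-Laffaille condition at $\ell$ (via good reduction of $J^\new$ at $\ell$) are routine, and the construction of the rank-$1$ free lift $K'$ is the concrete mechanism activating the deformation-theoretic obstruction supplied by Corollary~\ref{main4}.
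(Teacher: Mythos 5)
Your proof is correct and takes essentially the same route as the paper: rule out $J^\new[\fm]\subseteq\ker\pi$ by minimality of $\pi$, use multiplicity one (via Mazur's Lemma 15.1) to identify $J[\fm^2]\cong(\bfT/\fm^2)^2$, reduce to the case $s=2$, find a Galois-stable $\bfT/\fm^2$-line inside $J[\fm^2]$, and observe that this makes the representation on $J[\fm^2]$ a reducible Fontaine-Laffaille deformation of $\ov\rho$ of cyclotomic determinant, which is ruled out by Corollary~\ref{main4}; the non-splitness of \eqref{ext2} then finishes. Two small phrasing cavils: it is not $K'$ itself (a rank-one module) that is the forbidden deformation, but rather the two-dimensional representation $G_\bfQ\to\GL_2(\bfT/\fm^2)$ on $J[\fm^2]$ which the stable line $K'$ forces to be upper-triangular with split trace; and the Gorenstein/freeness input is not really an independent ``technical obstacle'' to be assumed --- it follows from $\dim_{\bfF_\ell}J[\fm]=2$ (known by Yoo's work, cited in the paper) together with the relevant generalization of Mazur's Lemma 15.1, exactly as the paper invokes it.
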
 
 \begin{proof}
Let $H$ be the $\ell$-primary part of $\ker(\pi)$.  
Note that $J^\new[\fm] \not\subset H$, since otherwise $\pi$ factors through 
$$J^\new\to J^\new/J^\new[\fm]\cong J^\new\overset{\pi'}{\to} J',$$ 
contradicting the minimality of the degree of $\pi$. Since $\fm$ is new and satisfies multiplicity one, we have $J[\fm]\cong J^\new[\fm]$. 
One can consider 
$H $ as a subgroup scheme of $J[\fm^s]$ for some $s \in \bfZ_+$. We claim 
that $H$ is a proper subscheme of $J[\fm]$. 
If this is not the case (i.e., $H$ is not  a proper subscheme of $J[\fm]$) then we see as in the proof of Proposition 4.5 in \cite{KlosinPapikian18} that without loss of generality we may assume that   $s=2$. The equivalence of (1) and (2) in Lemma 15.1 of \cite{Mazur78} implies that since $\dim_{\bfF_{\ell}} J[\fm]=2$ we get $J[\fm^2] \cong \bfT/\fm^2 \oplus \bfT/\fm^2$ as $\bfT$-modules. Hence $H=\bfT/\fm^{s_1} \oplus \bfT/\fm^{s_2}$, with $0 \leq s_1 \leq s_2$. Clearly $s_1=0$ since otherwise $H\supset H[\fm] = J[\fm]$. Also $s_2=2$ as otherwise $H \subset J[\fm]$. Hence $H$ is a Galois stable line (free $\bfT/\fm^2$-module of rank 1) in $J[\fm^2]$. Let $\chi_1$ be the character by which $G_{\bfQ}$ acts on this line and write $\chi_2$ for the character by which it acts on the quotient $J[\fm^2]/H$. Then the Galois representation $\rho: G_{\bfQ}\to \GL_2(\bfT/\fm^2)$ afforded by $J[\fm^2]$ satisfies the conditions in Corollary \ref{main4} with 
 $\{\ell_1, \ell_2\} = \{p,q\}$, 
$\Sigma'=\{p,q,\ell\}$ (we note that $\rho$ is in the image of the Fontaine-Laffaille functor since it arises as a subquotient of the Galois representation afforded by the Tate module of an abelian variety), so it cannot exist. Thus, $H\subsetneq J[\fm]$. Finally, because $J[\fm]$ 
is non-split, 
the only $G_\bfQ$-stable subgroup of $J[\fm]$ is its constant subgroup $\bfZ/\ell$ which comes from the cuspidal divisor group. 
\end{proof}

To conclude the introduction, let us briefly comment on how Theorem \ref{thmintro} applies to Ogg's conjecture. 
Assumption \eqref{eq1.3} can be checked using an explicit matrix representation of generators of $\bfT$.  
In the case $N=65$ we carry out this calculation in section \ref{Matrices}. In fact in this case we are able to prove a stronger result, 
namely that $M_p(J) \cong M_q(J)$ as $\bfT$-modules without inverting $\sigma_S$. 
(This also shows that \eqref{eqHelm} is not true without inverting the Eisenstein maximal ideals 
since the Jacobians $J$ and $J'$ are not isomorphic in this case.)
The assumption $J^\new/J^\new[\fm]\cong J^\new$ is satisfied if, for example, $\fm$ is globally principal in $\bfT$. 
(Indeed, if $\eta$ is a generator then $J^\new[\fm]$ is the kernel of the isogeny $J^\new \xrightarrow{\cdot \eta} J^\new$.) 
This stronger assumption on the principality of $\fm$ is satisfied for some Eisenstein maximal ideals in Hecke 
algebras of small levels, for example, those $N=pq$ for which $J=J^\new$ (equiv. $(p-1)$ and $(q-1)$ divide $12$), which is related to 
the fact that in those cases the Hecke algebra $\bfT\otimes \bfQ$ turns out to be a direct product of number fields of class number $1$. 
Finally, once we know $\ker(\pi)\otimes \bfZ_\ell\subset \overline{\mC}[\ell]$, the $\ell$-primary part 
of $\ker(\pi)$ can be determined by comparing the component groups of $J^\new$ and $J^\new/\overline{\mC}[\ell]$ 
with the component groups of $J'$, as was originally done by Ogg. 
In the case $N=65$, the prime $\ell=7$ is the only one which satisfies the conditions of 
Theorem \ref{thmintro} (the other two Eisenstein primes are 2 and 3). 
Thus Theorem \ref{thmintro} gives an alternative proof that for $N=65$ there is an isogeny $\pi: J\to J'$ such that 
$\ker(\pi)\otimes \bfZ_\ell\cong \mK\otimes \bfZ_\ell$ for $\mK$ in \eqref{eqOggConj} and $\ell\geq 5$.

\section{Non-existence of certain Galois extensions} \label{The Theorem}
Let $\ell>2$ and $\Sigma:=\{\ell_1, \ell_2, \dots, \ell_k\}$ be a set of distinct primes such that $\ell \nmid \ell_i(\ell_i-1)$ for $i=1, \dots, k$.  Write  $\Sigma':=\Sigma \cup \{\ell\}$ and $G_{\Sigma'}$ for the absolute Galois group of the maximal Galois extension of $\bfQ$ unramified outside $\Sigma'$.

Consider a representation $\ov{\rho}: G_{\Sigma'} \to \GL_2(\bfF_{\ell})$ which  is a non-split extension of the form $$\ov{\rho}= \bmat 1 & * \\ & \ov{\epsilon}\emat,$$ where $\epsilon$ will denote the $\ell$-adic cyclotomic character (but we will also use $\epsilon$ to denote the reduction of the $\ell$-adic cyclotomic character mod $\ell^m$) and $\ov{\epsilon}$ its mod $\ell$ reduction.

The main result of this section is Theorem \ref{main3} (and Corollary \ref{main4}) which asserts the non-existence of certain trace-reducible deformations of $\ov{\rho}$. The proof essentially boils down to showing that there are no (trace-reducible) deformations to $\bfZ/\ell^2$ as well as no non-trivial (trace-reducible) deformations to the dual numbers $\bfF_{\ell}[X]/X^2$. We begin with the $\bfZ/\ell^2$-case -- the harder of the two (Proposition \ref{main} below), which we prove in a slightly greater generality than needed for our purposes.
We fix once and for all an embedding $\ov{\bfQ} \hookrightarrow \ov{\bfQ}_{\ell}$.   Let $m\geq 2$ be an integer.
\begin{prop} \label{main} Suppose $\val_{\ell}(\ell_1^2-1)=m-1$ (which is equivalent to $\val_{\ell}(\ell_1+1)=m-1$) and $\val_{\ell}(\ell_i^2-1)=0$ (equivalent to $\val_{\ell}(\ell_i+1)=0$)  for all $i=2,3,\dots, k$. Then there does not exist a Galois representation $\rho_m: G_{\Sigma'}\to \GL_2(\bfZ/\ell^m)$ such that \begin{itemize}
\item [(i)] $\rho_m$ is crystalline in the image of the Fontaine-Laffaille functor at $\ell$; 
\item [(ii)] $\det \rho_m=\epsilon$;
\item [(iii)] $\tr \rho_m = \chi_1 + \chi_2$ for some Galois characters $\chi_1, \chi_2: G_{\Sigma'} \to (\bfZ/\ell^m)^{\times}$ with $\chi_1\equiv 1$ (mod $\ell$) and $\chi_2\equiv \ov{\epsilon}$ (mod $\ell$);
\item[(iv)] $\rho_m \equiv \ov{\rho}$ mod $\ell$.
\end{itemize}
\end{prop}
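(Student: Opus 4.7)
The plan is to argue by contradiction: assume such a $\rho_m$ exists, put it in upper-triangular form, pin down the diagonal characters exactly, and extract a local obstruction at $\ell_1$ that kills the ramification of $\ov\rho$.

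First I would use pseudo-representation theory. Since $\bar\chi_1 = 1$ and $\bar\chi_2 = \bar\epsilon$ are distinct characters of $G_{\Sigma'}$ (as $\ell > 2$), the standard Urban / Bella\"iche--Chenevier machinery produces a $\bfZ/\ell^m$-basis in which
\[
\rho_m \cong \begin{pmatrix} \chi_1 & b \\ 0 & \chi_2 \end{pmatrix}.
\]
Since $\rho_m$ is Fontaine--Laffaille and $\chi_1, \chi_2$ appear as its sub- and quotient-characters, they are crystalline at $\ell$; combined with $\chi_1 \chi_2 = \epsilon$ this forces $\chi_1$ to be unramified at $\ell$ and $\chi_2 = \epsilon \cdot (\text{unramified at }\ell)$. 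Because $\chi_1$ has values in the pro-$\ell$ group $1+\ell\bfZ/\ell^m$ and the assumption $\ell \nmid \ell_i(\ell_i-1)$ makes the pro-$\ell$ part of $\bfZ_{\ell_i}^{\times}$ trivial for every $i$, the character $\chi_1$ is automatically unramified at every $\ell_i$ as well. So $\chi_1$ is unramified everywhere and Kronecker--Weber gives $\chi_1 = 1$, hence $\chi_2 = \epsilon$.

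Now $\rho_m = \begin{pmatrix} 1 & b \\ 0 & \epsilon \end{pmatrix}$, and I would extract a local obstruction at $\ell_1$. Restricting to $G_{\bfQ_{\ell_1}}$, the diagonal is unramified, so $b|_{I_{\ell_1}} : I_{\ell_1} \to \bfZ/\ell^m$ factors as a homomorphism through the tame quotient. Let $\sigma$ topologically generate the pro-$\ell$ part of tame inertia, so that the tame relation reads $\mathrm{Frob}_{\ell_1} \sigma \mathrm{Frob}_{\ell_1}^{-1} = \sigma^{\ell_1}$. Computing the matrix $\rho_m(\mathrm{Frob}_{\ell_1})\rho_m(\sigma)\rho_m(\mathrm{Frob}_{\ell_1})^{-1}$ directly gives
\[
\ell_1 \cdot b(\sigma) \;=\; b(\sigma^{\ell_1}) \;=\; b\bigl(\mathrm{Frob}_{\ell_1} \sigma \mathrm{Frob}_{\ell_1}^{-1}\bigr) \;=\; \ell_1^{-1} \cdot b(\sigma),
\]
so $(\ell_1^2-1) \cdot b(\sigma) \equiv 0 \pmod{\ell^m}$. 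The hypothesis $\val_\ell(\ell_1^2-1) = m-1$ then forces $b(\sigma) \in \ell \cdot \bfZ/\ell^m$, i.e.\ the reduction $\bar b|_{I_{\ell_1}}$ vanishes.

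The same calculation at each $\ell_i$ with $i \geq 2$, where now $\ell_i^2-1$ is a unit modulo $\ell^m$, yields $b(\sigma_{\ell_i}) = 0$ outright; so $b$ itself is unramified at those $\ell_i$, and $\bar b$ is unramified at $\ell_1$ too. Hence $\bar b$ defines a non-zero class in $H^1(G_{\Sigma'}, \mu_\ell^{-1})$ that is unramified outside $\ell$ and Fontaine--Laffaille at $\ell$. The main obstacle is the final step: to derive a contradiction from the existence of such a $\bar b$. The plan is to combine the global Euler characteristic $\chi(G_{\{\ell,\infty\}},\mu_\ell^{-1}) = 1/\ell$ with Poitou--Tate duality, which for $\ell \geq 5$ gives $|H^1(G_{\{\ell,\infty\}},\mu_\ell^{-1})| = \ell$ (the dual $H^0(\mu_\ell^{\otimes 2}) = 0$ in this range), and then to verify that the unique non-trivial class in this group fails to be crystalline at $\ell$ -- essentially because under Kummer duality it is ``generated by $\ell$'' and hence wildly ramified at $\ell$. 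This last cohomological calculation is the technical heart of the argument; everything else is formal.
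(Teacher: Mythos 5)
Your reduction of the problem (Urban to upper-triangularize, the Fontaine--Laffaille argument for $\chi_1$ unramified at $\ell$ combined with the pro-$\ell$ trick at the $\ell_i$'s and Kronecker--Weber to force $\chi_1=1$, and the tame-relation computation showing $(\ell_1^2-1)b(\sigma)\equiv 0\pmod{\ell^m}$ so that the mod-$\ell$ cocycle $\bar b$ is unramified at every $\ell_i$) is correct and is a genuinely different, arguably more hands-on, route than the paper's. The paper instead puts everything into the language of Selmer groups: it shows $\#\mS^{\Sigma}(\bfQ,W_m)\leq \ell^{m-1}\#\mS^{\emptyset}(\bfQ,W_m)$ by bounding the cokernel of the local restriction maps (Lemma~\ref{2}), and then contradicts the order $\ell^m$ of the class. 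Your version trades that Poitou--Tate bookkeeping for an explicit tame-inertia calculation and a reduction modulo $\ell$; both converge on the same bottleneck: one must show that the ``everywhere-unramified'' Selmer group $\mS^{\emptyset}(\bfQ,\bfF_\ell(-1))$ vanishes.

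That bottleneck is where your proposal breaks down. You try to get it by showing $|H^1(G_{\{\ell,\infty\}},\bfF_\ell(-1))|=\ell$ from the global Euler characteristic, citing ``the dual $H^0(\mu_\ell^{\otimes 2})=0$'' to dispose of $H^2$. But global Tate--Poitou duality does not identify $H^2(G_S,M)$ with $H^0(G_S,M^*(1))^\vee$; it only gives $\Sha^2(G_S,M)\cong \Sha^1(G_S,M^*(1))^\vee$, and the exact sequence $H^2(G_S,M)\to\bigoplus_v H^2(G_v,M)\to H^0(G_S,M^*(1))^\vee\to 0$ shows that vanishing of $H^0(G_S,\bfF_\ell(2))$ only makes the localization map on $H^2$ surjective, not $H^2$ itself zero. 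In fact $H^2(G_S,\bfF_\ell(-1))\cong\Sha^1(G_S,\bfF_\ell(2))^\vee$, and killing that group amounts to killing a piece of the \emph{plus} part (the $\omega^2$-eigenspace) of $\Cl_{\bfQ(\mu_\ell)}$, which is a Vandiver-type statement and is not known in general. The paper avoids this entirely: $\mS^{\emptyset}(\bfQ,W_1)$ is identified with $\Hom(\Cl_{\bfQ(\mu_\ell)},W_1)^{\Gal}$, which lives in the $\omega^{-1}$-eigenspace (the \emph{minus} part), and Herbrand's theorem together with $B_2=1/6$ kills it for $\ell\geq 5$. You need this eigenspace/Herbrand argument for the last step; the Euler-characteristic computation as written proves strictly less than claimed, and the intended ``unique nontrivial class is wildly ramified'' heuristic has no basis once $|H^1|=\ell$ is no longer available.
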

\begin{rem} Below for brevity we will refer to representations in the image of the Fontaine-Laffaille functor simply as Fontaine-Laffaille representations. All the properties of such representations that we will use are stated e.g. in   \cite{BergerKlosin13}.  \end{rem}
We prepare the proof of Proposition \ref{main} by several lemmas.

\begin{lemma} \label{4} We must have $\chi_1=1$ and $\chi_2=\epsilon$ \end{lemma}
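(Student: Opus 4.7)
The plan is to prove that $\chi_1 = 1$; then $\chi_2 = \epsilon$ follows immediately from $\chi_1 \chi_2 = \det \rho_m = \epsilon$. Since $\chi_1$ takes values in the pro-$\ell$ group $1 + \ell(\bfZ/\ell^m)$, I would show it is unramified at every finite prime and then invoke the fact that $\bfQ$ has class number one, so that any everywhere-unramified abelian character is trivial.

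First I would dispose of the ramification at each $\ell_i \in \Sigma$. The restriction of $\chi_1$ to the inertia subgroup $I_{\ell_i}$ lands in a pro-$\ell$ group, and since $\ell \neq \ell_i$ the wild inertia (which is pro-$\ell_i$) is killed, so $\chi_1|_{I_{\ell_i}}$ factors through the maximal pro-$\ell$ quotient of the tame inertia. That quotient is cyclic of order equal to the $\ell$-part of $\ell_i - 1$, which is trivial by the running hypothesis $\ell \nmid \ell_i(\ell_i - 1)$. Hence $\chi_1$ is unramified at every $\ell_i$.

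For the ramification at $\ell$, I would exploit the fact that $\ov{\rho}$ is residually multiplicity-free, since its Jordan--H\"older constituents $1$ and $\ov{\epsilon}$ are distinct (as $\ell > 2$). A standard upgrading of trace-reducibility to a genuine filtration in the multiplicity-free setting then produces a short exact sequence of $\bfZ/\ell^m[G_{\Sigma'}]$-modules
\[
0 \to \chi_1 \to \rho_m \to \chi_2 \to 0,
\]
with the order of the factors forced by the nonsplit residual extension. Restricting to $G_{\bfQ_\ell}$ and using that the category of Fontaine--Laffaille modules is abelian, the subrepresentation $\chi_1|_{G_{\bfQ_\ell}}$ inherits a Fontaine--Laffaille structure. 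Since its reduction $\ov{\chi}_1 = 1$ has Hodge--Tate weight zero, so does $\chi_1$, and a Hodge--Tate weight zero Fontaine--Laffaille character over $\bfZ/\ell^m$ is unramified. Combined with the previous step, $\chi_1$ is unramified everywhere and therefore trivial.

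The delicate step I expect to be the main obstacle is the one at $\ell$: one must verify both that trace-reducibility over the Artinian ring $\bfZ/\ell^m$ yields an honest submodule of $\rho_m$ (not merely a subquotient of its semisimplification), and that this submodule remains a Fontaine--Laffaille object with the expected Hodge filtration, so that its weight can be read off from the residual character. Note that the quantitative hypothesis $\val_\ell(\ell_1 + 1) = m-1$ does not seem to play any role in this lemma; it will presumably enter the subsequent lemmas which constrain the cohomological deformation space once the pair $(\chi_1, \chi_2) = (1, \epsilon)$ has been pinned down.
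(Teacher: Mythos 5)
Your proof is correct and follows essentially the same strategy as the paper: show that $\chi_1$ is an unramified character of $G_{\bfQ}$ and conclude triviality from the fact that $\bfQ$ has class number one. Where the paper compresses the argument — reducing to infinitesimal deformations of the trivial character and pointing to Proposition~9.5 of \cite{BergerKlosin13} — you have unpacked it into the explicit local steps (tameness plus $\ell \nmid \ell_i - 1$ at each $\ell_i$; weight-zero Fontaine--Laffaille forcing unramifiedness at $\ell$, with the subrepresentation structure supplied by Urban's theorem exactly as the paper uses it in the proof of Proposition~\ref{main}). Your closing remark is also accurate: the quantitative hypothesis $\val_\ell(\ell_1+1)=m-1$ is used only in the Selmer group bound of Lemma~\ref{2}, not in this lemma.
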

\begin{proof} It is enough to show that $\chi_1=1$ as then $\chi_2=\epsilon$ by (ii). First note that since $\rho_m$ is a Fontaine-Laffaille representation and the category of these is closed under taking subquotients, so is $\chi_1$. Furthermore, $\chi_1$ is unramified outside $\Sigma'$. Hence to prove the claim it is enough to  show that the trivial character does not admit any non-trivial Fontaine-Laffaille infinitesimal deformations $\psi: G_{\Sigma'} \to (\bfF[X]/X^2)^{\times}$. This in turn can be proven as Proposition 9.5 of \cite{BergerKlosin13}. \end{proof}

To prove Proposition \ref{main} let us first note that by the main Theorem of \cite{Urban99} if $\rho_m$ whose trace splits as in (iii) exists then it can be conjugated to an upper-triangular representation of the form $$\rho_m = \bmat\chi_1 & * \\ & \chi_2\emat.$$ We can treat $\rho_m$ as an element  of $H^1(\bfQ, (\bfZ/\ell^m)(\chi_1\chi_2^{-1}))$ which does not lie in $H^1(\bfQ, (\ell\bfZ/\ell^m \bfZ)(\chi_1\chi_2^{-1}))$, i.e., is of maximal order. This is so, because the extension given by $\rho_m$ reduces mod $\ell$ to $\ov{\rho}$ which is not split.

For the moment we will work in a slightly greater generality and assume that $\chi_1=1$ and  $\chi_2=\chi = \epsilon^n$ for $n\neq 0$, however we apply it only in the case when $n=1$.  Set $$T=\bfZ_{\ell}(-n) = \bfZ_{\ell}(\epsilon^{-n}), \quad V=\bfQ_{\ell}(-n), \quad W=\bfQ_{\ell}/\bfZ_{\ell}(-n)$$ and $$ W_M:= \ell^{-M}\bfZ_{\ell}/\bfZ_{\ell}(-n) = \bfZ_{\ell}/\ell^M\bfZ_{\ell}(-n) = W[\ell^M],$$ where by $W[s]$ we mean the $s$-torsion.
For a place $v$ of $\bfQ$, and $M=V, W$ or $W_M$,  set $H^1_{\rm ur}(\bfQ_v, M) = \ker (H^1(\bfQ_v, M) \to H^1(I_v, M)).$ Then, following \cite{Rubin00}, section 1.3, we set  $$H^1_f(\bfQ_{v},V):=\begin{cases} H^1_{\rm ur}(\bfQ_v, V)  & \textup{if $v\neq \ell$}\\ \ker (H^1(\bfQ_{v}, V)\to H^1(\bfQ_{v}, V\otimes_{\bfQ_{v}} B_{\rm cris})) & \textup{if $v=\ell$}.\end{cases}$$ We define $H^1_f(\bfQ_v, W)$ as the image of $H^1_f(\bfQ_v, V)$ in $H^1(\bfQ_v, W)$. For the finite set $\Sigma$ of finite places of $\bfQ$, we then define the global Selmer group (cf. \cite{Rubin00}, Definition 1.5.1):
$$\mS^{\Sigma}(\bfQ, W):= \ker(H^1(\bfQ, W) \to \bigoplus_{v \not\in \Sigma}\frac{H^1(\bfQ_v, W)}{H^1_f(\bfQ_v, W)}).$$ One defines $\mS^{\Sigma}(\bfQ, W_M)$ similarly (cf. \cite{Rubin00}, p. 22). 
 \begin{lemma} \label{1} One has $\mS^{\Sigma}(\bfQ, W_M) = \mS^{\Sigma}(\bfQ, W)[\ell^M]$. \end{lemma}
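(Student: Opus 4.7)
The plan is to exploit the short exact sequence of $G_{\Sigma'}$-modules
$$0 \to W_M \to W \xrightarrow{\ell^M} W \to 0,$$
which exists because $W = \bfQ_\ell/\bfZ_\ell(-n)$ is $\ell$-divisible with $\ell^M$-torsion equal to $W_M$ by definition. Taking continuous cohomology over $\bfQ$ yields the long exact sequence
$$W^{G_\bfQ} \xrightarrow{\ell^M} W^{G_\bfQ} \to H^1(\bfQ, W_M) \to H^1(\bfQ, W) \xrightarrow{\ell^M} H^1(\bfQ, W).$$
If I can show $W^{G_\bfQ} = 0$, then this collapses to an identification $H^1(\bfQ, W_M) \cong H^1(\bfQ, W)[\ell^M]$ induced by the inclusion $W_M \hookrightarrow W$.

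For the vanishing, I would use that in our application $n = 1$ (and the argument works whenever $\ell \nmid n$): pick $g \in G_\bfQ$ with $\epsilon^{-n}(g) \not\equiv 1 \pmod{\ell}$, which exists because $\epsilon$ surjects onto $(\bfZ/\ell\bfZ)^\times$ for $\ell > 2$. Then $\epsilon^{-n}(g) - 1 \in \bfZ_\ell^\times$, and this unit annihilates any $w \in W^{G_\bfQ}$, forcing $w = 0$.

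For the matching of local conditions, I would invoke Rubin's convention (\cite{Rubin00}, \S1.3) that $H^1_f(\bfQ_v, W_M)$ is defined as the preimage of $H^1_f(\bfQ_v, W)$ under the natural map $H^1(\bfQ_v, W_M) \to H^1(\bfQ_v, W)$. Consequently, a class $c \in H^1(\bfQ, W_M)$ satisfies the Selmer condition at $v \notin \Sigma$ if and only if its image in $H^1(\bfQ, W)$ does. Combined with the global identification of the previous paragraph, this gives
$$\mS^\Sigma(\bfQ, W_M) = \mS^\Sigma(\bfQ, W) \cap H^1(\bfQ, W)[\ell^M] = \mS^\Sigma(\bfQ, W)[\ell^M],$$
as claimed. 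The main (and essentially only) obstacle is the vanishing $W^{G_\bfQ} = 0$; once this is established, the remainder is a formal consequence of the Selmer group definitions and requires no further input.
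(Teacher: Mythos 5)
Your argument is correct and is essentially the paper's: the paper simply cites Rubin's Lemma~1.5.4 for the surjection $\mS^{\Sigma}(\bfQ, W_M) \twoheadrightarrow \mS^{\Sigma}(\bfQ, W)[\ell^M]$ and then observes that the vanishing $W^{G_{\bfQ}}=0$, which enters via Rubin's Lemma~1.2.2(i), upgrades it to an isomorphism; you have merely unpacked the proof of Rubin's lemma (long exact sequence for $0\to W_M\to W\xrightarrow{\ell^M}W\to 0$ plus the compatibility of local conditions). One small imprecision: the parenthetical claim that the vanishing $W^{G_{\bfQ}}=0$ holds ``whenever $\ell\nmid n$'' is not right; the correct hypothesis is that $\overline{\epsilon}^{\,n}\neq 1$, i.e.\ $(\ell-1)\nmid n$ (e.g.\ $n=\ell-1$ is coprime to $\ell$ but $\epsilon^{-n}$ is trivial modulo $\ell$). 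This does not affect the case $n=1$ used in the paper.
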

\begin{proof} By Lemma 1.5.4 of \cite{Rubin00}, we get that there is a natural surjection of the left-hand side onto the right-hand side. However, the proof of that lemma uses the exact sequence in Lemma 1.2.2(i) in \cite{Rubin00} and in our case $W^{G_{\bfQ}}=0$, which shows that the surjection is in fact an isomorphism. \end{proof}

Let us first relate $\mS^{\Sigma}(\bfQ, W_m)$ to $\mS^{\emptyset}(\bfQ, W_m)$. 
\begin{lemma}\label{2} Suppose $n \neq 0$, $\val_{\ell}(\ell_1^{n+1} -1)=m-1$ and  $\val_{\ell}( \ell_i^{n+1}-1)=0$ for all $i=2,3, \dots, k$. Then one has $$\#\mS^{\Sigma}(\bfQ, W_m) \leq \ell^{m-1} \#\mS^{\emptyset}(\bfQ, W_m).$$\end{lemma}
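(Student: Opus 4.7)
The plan is to compare the two Selmer groups through the natural map coming from localization at the primes in $\Sigma$, reducing the estimate to a direct local computation that is controlled by the hypothesis on $\val_\ell(\ell_i^{n+1}-1)$.

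First I would observe that $\mS^\emptyset(\bfQ, W_m)$ and $\mS^\Sigma(\bfQ, W_m)$ have the same local condition at $\ell$ (the crystalline condition $H^1_f$) and at every prime $v \notin \Sigma'$ (the unramified condition), and differ only in that $\mS^\Sigma$ imposes no condition at the finitely many primes $\ell_i \in \Sigma$. This gives an exact sequence
\begin{equation*}
0 \longrightarrow \mS^\emptyset(\bfQ, W_m) \longrightarrow \mS^\Sigma(\bfQ, W_m) \longrightarrow \bigoplus_{i=1}^k \frac{H^1(\bfQ_{\ell_i}, W_m)}{H^1_{\rm ur}(\bfQ_{\ell_i}, W_m)},
\end{equation*}
so that $\#\mS^\Sigma(\bfQ, W_m) \le \#\mS^\emptyset(\bfQ, W_m) \cdot \prod_i \#\bigl(H^1(\bfQ_{\ell_i},W_m)/H^1_{\rm ur}(\bfQ_{\ell_i},W_m)\bigr)$.

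Next I would compute each local quotient. Since $\ell_i \neq \ell$ and $W_m$ is unramified at $\ell_i$, the inflation-restriction sequence together with the fact that $G_{\ell_i}/I_{\ell_i} \cong \hat{\bfZ}$ has cohomological dimension one gives an isomorphism
\begin{equation*}
H^1(\bfQ_{\ell_i}, W_m)/H^1_{\rm ur}(\bfQ_{\ell_i}, W_m) \;\cong\; H^1(I_{\ell_i}, W_m)^{G_{\ell_i}/I_{\ell_i}}.
\end{equation*}
Because $\ell_i \neq \ell$, the $\ell$-primary part of the tame inertia is $\bfZ_\ell(1)$, hence $H^1(I_{\ell_i},W_m) \cong W_m(-1)$ on which $\Frob_{\ell_i}$ acts by $\ell_i^{-n-1}$. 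The Frobenius invariants are therefore the kernel of multiplication by $\ell_i^{-n-1}-1$, or equivalently by $\ell_i^{n+1}-1$, on $W_m = \bfZ/\ell^m$.

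Finally I would plug in the hypotheses. For $i \ge 2$, $\val_\ell(\ell_i^{n+1}-1)=0$, so $\ell_i^{n+1}-1$ is a unit in $\bfZ/\ell^m$ and the local quotient is trivial. For $i=1$, $\val_\ell(\ell_1^{n+1}-1)=m-1$, so the kernel of this multiplication on $\bfZ/\ell^m$ has order $\ell^{m-1}$. Combining these local contributions with the displayed exact sequence yields the asserted bound $\#\mS^\Sigma(\bfQ, W_m) \le \ell^{m-1}\cdot \#\mS^\emptyset(\bfQ, W_m)$. The argument is essentially local and routine; the only subtle point is to line up the Tate twist conventions so that the action of $\Frob_{\ell_i}$ on $H^1(I_{\ell_i}, W_m)$ is correctly identified with multiplication by $\ell_i^{-n-1}$, which is where the hypothesis on $\val_\ell(\ell_i^{n+1}-1)$ enters.
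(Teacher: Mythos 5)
Your proposal is correct and takes essentially the same route as the paper: the same elementary exact sequence comparing $\mS^{\emptyset}$ and $\mS^{\Sigma}$ through the localization maps at $\Sigma$, the same inflation-restriction argument (via $G_{\ell_i}/I_{\ell_i}\cong\hat{\bfZ}$ having cohomological dimension one), and the same identification $H^1(I_{\ell_i},W_m)\cong W_m(-1)$ with Frobenius acting by $\ell_i^{-n-1}$. The paper cites Poitou--Tate duality (Rubin, Thm.\ 1.7.3) for the exact sequence and spells out the Frobenius-invariance condition slightly more explicitly, but the argument is the same.
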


\begin{proof} Fix $s \in \{1,2,\dots, k\}$. Since $W$ is unramified at $\ell_s$ we get $H^1_{\rm ur}(\bfQ_{\ell_s}, W) = H^1_f(\bfQ_{\ell_s}, W)$ (by Lemma 1.3.5(iv) in \cite{Rubin00})   as well as  $H^1_{\rm ur}(\bfQ_{\ell_s}, W_m) = H^1_f(\bfQ_{\ell_s}, W_m)$ (by Lemma 1.3.8(ii) in \cite{Rubin00}) and 
 \be\label{eq0}H^1(I_{\ell_s}, W_m)  = \Hom(\bfZ_{\ell}(1), W_m) =W_m(-1).\ee This gives an upper bound of $\ell^m$ on the order of the quotient $H^1(\bfQ_{\ell_s}, W_m)/H^1_f(\bfQ_{\ell_s}, W_m)$. However, let us now show that the upper bound is in fact $\ell^{m-1}$ (resp. $1$) if $s=1$ (resp. $s\neq 1$). Indeed, this will follow if we show that the map $H^1(\bfQ_{\ell_s}, W_m) \to H^1(I_{\ell_s}, W_m)$ is not surjective (resp. is the zero map) if $s=1$ (resp. $s\neq 1$). To do so consider the inflation-restriction sequence (where we set $G:=\Gal(\bfQ_{\ell_s}^{\rm ur}/\bfQ_{\ell_s})$):
$$H^1(G, W_m) \to H^1(\bfQ_{\ell_s}, W_m) \to H^1(I_{\ell_s}, W_m)^{G}\to H^2(G, W_m).$$ The last group in the above sequence is zero since $G\cong \hat{\bfZ}$ and $\hat{\bfZ}$ has cohomological dimension one. This means that the image of the restriction map $H^1(\bfQ_{\ell_s}, W_m) \to H^1(I_{\ell_s}, W_m)$ equals $H^1(I_{\ell_s}, W_m)^{G}$. Let us show that the  latter $\bfZ_{\ell}$-module is a proper submodule of $H^1(I_{\ell_s}, W_m)$  (resp. is the zero module) if $s=1$ (resp. $s\neq 1$). Indeed, $$ H^1(I_{\ell_s}, W_m)^{G}=\Hom_G(\bfZ_{\ell}(1), \frac{\frac{1}{\ell^m}\bfZ_{\ell}}{\bfZ_{\ell}}(-n)) = \Hom_G(\bfZ_{\ell}, \frac{\frac{1}{\ell^m}\bfZ_{\ell}}{\bfZ_{\ell}}(-n-1)).$$ So, $\phi \in H^1(I_{\ell_s}, W_m)$ lies in $H^1(I_{\ell_s}, W_m)^G=\Hom_G(\bfZ_{\ell}, \frac{\frac{1}{\ell^m}\bfZ_{\ell}}{\bfZ_{\ell}}(-n-1))$ if and only if $\phi(x)=\phi(gx) = g\cdot \phi(x)=\epsilon^{-n-1}(g)\phi(x)$ for every $x \in I_{\ell_s}$ and every $g \in G$, i.e., if and only if \be \label{eq1} (\epsilon^{-n-1}(g)-1)\phi(x)\in \bfZ_{\ell}\quad \textup{for every $x \in I_{\ell_s}$, $g \in G$.}\ee Since $\Frob_{\ell_s}$ topologically generates $G$, we see that \eqref{eq1} holds if and only if it holds for every $x \in I_{\ell_s}$ and for $g=\Frob_{\ell_s}$. We have $\epsilon^{-n-1}(\Frob_{\ell_s})-1=\ell_s^{-n-1}-1=\frac{1-\ell_s^{n+1}}{\ell_s^{n+1}}$. Since $\ell_s^{n+1}\in \bfZ_{\ell}^{\times}$, condition \eqref{eq1} becomes \be \label{eq2} (1-\ell_s^{n+1})\phi(x)\in \bfZ_{\ell}\quad \textup{for every $x \in I_{\ell_s}$.}\ee By our assumption $\val_{\ell}(1-\ell_s^{n+1})=m-1$ (resp. $\val_{\ell}(1-\ell_s^{n+1})=0$) if $s=1$ (resp. $s\neq 1$), which implies that \eqref{eq2} is equivalent to $\ell^{m-1}\phi(x)=0$ (resp. $\phi(x)=0$) in $W_m$ if $s=1$ (resp. $s\neq 1$). Using the isomorphism \eqref{eq0} we see that this implies that $H^1(I_{\ell_s}, W_m)^G$ is a proper $\bfZ_{\ell}$-submodule of $H^1(I_{\ell_s}, W_m)$ as $W_m(-1)$ certainly contains elements not annihilated by $\ell^{m-1}$. 

Now, by the Poitou-Tate duality (cf. \cite{Rubin00}, Theorem 1.7.3) we have an exact sequence $$0 \to \mS^{\emptyset}(\bfQ, W_m) \to \mS^{\Sigma}(\bfQ, W_m)\to \bigoplus_{i=1}^k\frac{H^1(\bfQ_{\ell_i}, W_m)}{H^1_{f}(\bfQ_{\ell_i}, W_m)}.$$ As shown above the order of the module on the right is bounded from above by $\ell^{m-1}$. This gives the desired inequality.    \end{proof}

Let us record here one consequence of the above proof.
\begin{lemma} \label{cycl} Suppose $\mS^{\emptyset}(\bfQ, W_m)=0$. Assume $n \neq 0$, $\val_{\ell}(\ell_1^{n+1} -1)=m-1$ and $\val_{\ell}( \ell_i^{n+1}-1)=0$ for all $i=2,3, \dots, k$. Then  $\mS^{\Sigma}(\bfQ, W_m)$ is a cyclic $\bfZ_{\ell}$-module, i.e., $\mS^{\Sigma}(\bfQ, W_m)\cong \bfZ/\ell^s $. Furthermore, $\dim_{\bfF_{\ell}} \mS^{\Sigma}(\bfQ, W_1)=1$.
\end{lemma}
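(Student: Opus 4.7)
The plan is to combine the Poitou--Tate exact sequence already used in the proof of Lemma \ref{2} with the explicit local computations carried out there. Under the hypothesis $\mS^{\emptyset}(\bfQ, W_m) = 0$, that sequence degenerates to an injection
\[
\mS^{\Sigma}(\bfQ, W_m) \hookrightarrow \bigoplus_{i=1}^{k} \frac{H^1(\bfQ_{\ell_i}, W_m)}{H^1_f(\bfQ_{\ell_i}, W_m)}.
\]
Inside the proof of Lemma \ref{2}, each summand with $i \geq 2$ was shown to vanish (since $\val_{\ell}(\ell_i^{n+1}-1) = 0$ forces the image of the restriction map into $H^1(I_{\ell_i}, W_m)^{G}$ to be zero), while the summand at $\ell_1$ injects into $H^1(I_{\ell_1}, W_m)^{G}$; under the identification $H^1(I_{\ell_1}, W_m) \cong W_m(-1) \cong \bfZ/\ell^m$, that fixed submodule was identified with the elements annihilated by $\ell^{m-1}$, which is cyclic of order $\ell^{m-1}$. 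Therefore $\mS^{\Sigma}(\bfQ, W_m)$ embeds into a cyclic $\bfZ_\ell$-module of order $\ell^{m-1}$, yielding $\mS^{\Sigma}(\bfQ, W_m) \cong \bfZ/\ell^s$ with $0 \leq s \leq m-1$. This gives the first claim.

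For the dimension assertion, I would invoke Lemma \ref{1} to identify $\mS^{\Sigma}(\bfQ, W_1) = \mS^{\Sigma}(\bfQ, W)[\ell]$; since this is also the $\ell$-torsion inside any $\mS^{\Sigma}(\bfQ, W_m)$ for $m \geq 1$, and the latter is cyclic by the first step, the bound $\dim_{\bfF_{\ell}} \mS^{\Sigma}(\bfQ, W_1) \leq 1$ is automatic. To upgrade this to equality one needs a non-trivial class, and the natural candidate is the class $[\ov{\rho}] \in H^1(\bfQ, \bfF_{\ell}(-n))$ of the fixed non-split extension introduced at the opening of the section: it is non-zero by non-splitness, unramified outside $\Sigma'$ by construction, and it lies in $H^1_f(\bfQ_{\ell}, W_1)$ via the Fontaine--Laffaille structure of $\ov{\rho}$.

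The main obstacle is precisely the verification of the crystalline/Fontaine--Laffaille condition at $\ell$ for $[\ov{\rho}]$, which requires $\ov{\rho}$ itself to be Fontaine--Laffaille --- this is the context in which the lemma is ultimately applied (to $\ov{\rho} = J[\fm]$, a subquotient of the Tate module of a Jacobian, so Fontaine--Laffaille by the closure of the category under subquotients). Everything else reduces to linear algebra with the local cohomology groups already computed in the proof of Lemma \ref{2} combined with the bookkeeping from Lemma \ref{1}; no new input beyond those two lemmas is required.
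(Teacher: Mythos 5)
Your proof follows essentially the same route as the paper's: the Poitou--Tate exact sequence already invoked in Lemma \ref{2}, together with the hypothesis $\mS^{\emptyset}(\bfQ, W_m)=0$, gives an injection of $\mS^{\Sigma}(\bfQ, W_m)$ into the direct sum of local quotients; the computation inside the proof of Lemma \ref{2} kills the summands at $\ell_2,\dots,\ell_k$ and identifies the $\ell_1$-summand (via the first isomorphism theorem for the restriction map) with a $\bfZ_\ell$-submodule of $H^1(I_{\ell_1}, W_m)\cong W_m$, which is cyclic; and Lemma \ref{1} reduces the $W_1$-statement to this cyclicity. That is exactly the paper's argument.

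The one place you go beyond the paper is worth flagging. You correctly point out that cyclicity plus Lemma \ref{1} only give the inequality $\dim_{\bfF_\ell}\mS^{\Sigma}(\bfQ, W_1)\le 1$, and that promoting this to equality requires exhibiting a non-zero class in the Selmer group. The paper disposes of the equality in a single sentence (``the one-dimensionality statement follows from this and Lemma \ref{1}''), which as written only yields the upper bound; your caveat is well placed. Your proposed witness $[\ov\rho]$ is the natural candidate, and you are right that the sticking point is the local condition at $\ell$: Lemma \ref{cycl} as stated imposes no Fontaine--Laffaille hypothesis on $\ov\rho$, so the equality $\dim_{\bfF_\ell}\mS^{\Sigma}(\bfQ, W_1)=1$ is not justified by the lemma's stated hypotheses alone. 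In the context where the lemma is applied, $\ov\rho$ is a subquotient of a Tate module and is Fontaine--Laffaille, which does supply the non-trivial class; but strictly speaking that hypothesis should either be added to the statement, or the conclusion weakened to $\le 1$ (which is all that seems to be used downstream).
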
 
\begin{proof} From the Poitou-Tate duality (and the first isomorphism theorem for modules) we get $\mS^{\Sigma}(\bfQ, W_m) \subset \frac{H^1(\bfQ_{\ell_1}, W_m)}{H^1_{\rm ur}(\bfQ_{\ell_1}, W_m)}\cong I$,  where $I$ is the image of the restriction map $H^1(\bfQ_{\ell_1}, W_m) \to H^1(I_{\ell_1}, W_m)\cong W_m$. The last module is cyclic. The one-dimensionality statement follows   from this and Lemma \ref{1}. \end{proof}


From now on set $n=1$, so $W=\bfQ_{\ell}/\bfZ_{\ell}(-1)$. 
\begin{prop} \label{prop1} The Selmer group $\mS^{\emptyset}(\bfQ, W_m)$ is trivial. \end{prop}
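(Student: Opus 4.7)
The plan is to compute $\mS^\emptyset(\bfQ, W_m)$ by Iwasawa-theoretic means, ultimately relying on the Mazur--Wiles main conjecture for $\bfQ$.

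By Lemma \ref{1}, $\mS^\emptyset(\bfQ, W_m) = \mS^\emptyset(\bfQ, W)[\ell^m]$, so it suffices to show $\mS^\emptyset(\bfQ, W) = 0$. Furthermore, $\mS^\emptyset(\bfQ, W)$ is contained in the larger group $H^1(\bfZ[1/\ell], W)$ consisting of those classes in $H^1(\bfQ, W)$ that are merely unramified outside $\ell$, so it is enough to prove the vanishing of the latter; this has the advantage of removing the Bloch--Kato condition at $\ell$ from the bookkeeping.

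First I would pass to the cyclotomic tower. Set $K_\infty := \bfQ(\mu_{\ell^\infty})$ and write $\Gal(K_\infty/\bfQ) = \Delta \times \Gamma$ with $|\Delta| = \ell - 1$ (coprime to $\ell$, since $\ell > 2$) and $\Gamma \cong \bfZ_\ell$. Since $\Delta$ acts on $W = \bfQ_\ell/\bfZ_\ell(-1)$ through the nontrivial character $\omega^{-1}$, cohomology of $\Delta$ with coefficients in $W$ vanishes. Combined with the trivial action of $G_{K_\infty}$ on $W$ (because $K_\infty \supset \mu_{\ell^\infty}$), inflation--restriction gives
\[
H^1(\bfQ, W) \;\cong\; \Hom_{\Gal(K_\infty/\bfQ)}\!\bigl(G_{K_\infty}^{\mathrm{ab}}[\ell^\infty],\, W\bigr).
\]
The unramified-outside-$\ell$ condition then forces the resulting homomorphism to factor through $X := \Gal(L_\infty/K_\infty)$, where $L_\infty$ is the maximal abelian pro-$\ell$ extension of $K_\infty$ unramified outside $\ell$. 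Decomposing $X = \bigoplus_i X^{(i)}$ into $\Delta$-eigenspaces, only the $\omega^{-1}$-isotypic piece $X^{(-1)}$ contributes. Writing $\Lambda := \bfZ_\ell[[\Gamma]] \cong \bfZ_\ell[[T]]$ with $T = \gamma - 1$ for a topological generator $\gamma$ of $\Gamma$, the $\Gamma$-equivariance condition yields
\[
H^1(\bfZ[1/\ell], W) \;\cong\; \Hom_\Gamma(X^{(-1)}, W) \;\cong\; \Bigl(X^{(-1)}\big/\bigl(T - (\epsilon(\gamma)^{-1}-1)\bigr)X^{(-1)}\Bigr)^{\!\vee}.
\]

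The crucial input is the Iwasawa main conjecture (Mazur--Wiles): the characteristic ideal of $X^{(-1)}$ as a $\Lambda$-module is generated by a power series $f_{-1}(T)$ which, under Iwasawa's isomorphism $s \leftrightarrow \epsilon(\gamma)^s - 1$, corresponds to the Kubota--Leopoldt $\ell$-adic $L$-function $L_\ell(\omega^2, s)$. By the interpolation formula,
\[
f_{-1}(\epsilon(\gamma)^{-1}-1) \;=\; L_\ell(\omega^2, -1) \;=\; -(1-\ell)\,B_2/2 \;=\; -(1-\ell)/12,
\]
which lies in $\bfZ_\ell^\times$ for $\ell \geq 5$. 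By Ferrero--Washington, $X^{(-1)}$ has vanishing $\mu$-invariant, so it is finitely generated over $\bfZ_\ell$; hence the quotient $X^{(-1)}/(T-(\epsilon(\gamma)^{-1}-1))X^{(-1)}$ has order $|f_{-1}(\epsilon(\gamma)^{-1}-1)|_\ell^{-1} = 1$. Therefore $H^1(\bfZ[1/\ell], W) = 0$, and the proposition follows.

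The main technical obstacle is the bookkeeping in the last step: identifying $f_{-1}(\epsilon(\gamma)^{-1}-1)$ with $L_\ell(\omega^2, -1)$ up to an $\ell$-adic unit via the correct specialization of Iwasawa's isomorphism, and thus ensuring that the operative $L$-value is essentially $\zeta(-1) = -1/12$, an $\ell$-adic unit precisely for $\ell \geq 5$. With the standard conventions this is routine, but the twist must be tracked carefully.
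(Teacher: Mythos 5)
Your argument is correct in outline but takes a genuinely different, and considerably heavier, route than the paper's. The paper first uses the divisibility of $\mS^{\emptyset}(\bfQ,W)$ to reduce to the mod-$\ell$ group $\mS^{\emptyset}(\bfQ,W_1)$, then observes (via Fontaine--Laffaille, which here is automatic since $W_1^{I_\ell}=0$) that the local condition at $\ell$ is the unramified one, identifies $\mS^{\emptyset}(\bfQ,W_1)$ with the $\omega^{-1}$-eigenspace $\Hom(\Cl_{\bfQ(\mu_\ell)},W_1)^{\Gal(\bfQ(\mu_\ell)/\bfQ)}$, and kills it by the \emph{classical Herbrand theorem}: since $B_2=1/6$ is an $\ell$-adic unit for $\ell\geq 5$, that eigenspace vanishes. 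You instead enlarge the Selmer group to $H^1(\bfZ[1/\ell],W)$, pass to the full cyclotomic $\bfZ_\ell$-tower, and invoke the \emph{Mazur--Wiles main conjecture} together with \emph{Ferrero--Washington} to pin down the order of the relevant coinvariant module via the $\ell$-adic $L$-value $L_\ell(\omega^2,-1)=-(1-\ell)/12$. Both proofs ultimately rest on the same arithmetic fact, the $\ell$-adic unitness of $B_2$, but your route replaces a 1930s result (Stickelberger/Herbrand) by a 1980s one (the main conjecture plus $\mu=0$), which is disproportionate to the statement being proved.

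One point in your argument deserves to be made explicit: knowing that $\mathrm{char}_\Lambda(X^{(-1)})=(f_{-1})$ with $f_{-1}(\epsilon(\gamma)^{-1}-1)\in\bfZ_\ell^\times$, and that $\mu=0$, does not by itself force $X^{(-1)}/(T-c)X^{(-1)}=0$; a pseudo-null (i.e.\ finite) $\Lambda$-submodule could survive the quotient. (For instance $\Lambda/(T,\ell)\oplus\Lambda/(T-c')$ with $c'-c\in\bfZ_\ell^\times$ has unit characteristic value at $T=c$ yet nonzero coinvariants.) The argument goes through because the minus-part Iwasawa modules for $\bfQ(\mu_{\ell^\infty})$ are known to have no nonzero finite $\Lambda$-submodule (Iwasawa; see e.g.\ Washington, \emph{Introduction to Cyclotomic Fields}, Prop.\ 13.28), and you should cite this. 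You should also note that passing from $\mS^{\emptyset}$ to $H^1(\bfZ[1/\ell],W)$ uses that the Bloch--Kato condition at $v\neq\ell$ is contained in the unramified condition, which is true here but not a formality.
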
 \begin{proof} It is enough to show that the group $\mS^{\emptyset}(\bfQ, W_1)$ is trivial. Indeed, Lemma \ref{1} shows $\mS^{\emptyset}(\bfQ, W_m)=\mS^{\emptyset}(\bfQ, W)[\ell^m]$. So it suffices to show that  $\mS^{\emptyset}(\bfQ, W)=0$. Since the latter module is divisible, it is enough to show that it has no $\ell$-torsion, i.e., that $\mS^{\emptyset}(\bfQ, W)[\ell]=\mS^{\emptyset}(\bfQ, W_1)=0$. 
It follows from Fontaine-Laffaille theory that $H^1_f(\bfQ_{\ell}, W_1)=H^1_{\rm ur}(\bfQ_{\ell}, W_1)$ so that $\mS^{\emptyset}(\bfQ, W_1) =  \Hom(\Cl_{\bfQ(\mu_{\ell})}, W_1)^{\Gal(\bfQ(\mu_{\ell})/\bfQ)}$. The latter module is zero by Herbrand's Theorem since the relevant Bernoulli number $B_2=1/6$ (see e.g., Theorem 6.17 in \cite{Washingtonbook}). \end{proof}

\begin{proof}[Proof of Proposition \ref{main}] Assume that $\rho_m$ as in the proposition exists. 
We can treat $\rho_m$ as an element  of $H^1(\bfQ, (\bfZ/\ell^m)(\chi_1\chi_2^{-1}))$ which is not annihilated by $\ell^{m-1}$ because its mod $\ell$ reduction is non-split. By Lemma \ref{4} we have $\chi_1=1$ and $\chi_2=\epsilon$. Also note that $\bfZ/\ell^m(\epsilon^{-1}) \cong W_m$. The extension given by $\rho_m$ being unramified away from $\Sigma'$ and  Fontaine-Laffaille (at $\ell$) in fact gives rise to an element  inside $\mS^{\Sigma}(\bfQ, W_m)\subset H^1(\bfQ, W_m)$ not annihilated by $\ell^{m-1}$.
  However, combining  Lemma \ref{2} applied in the case $n=1$ with Proposition \ref{prop1}
we see that $\mS^{\Sigma}(\bfQ, W_m)$ is annihilated by $\ell^{m-1}$ which leads to a contradiction.
\end{proof}

\begin{prop} \label{no inf} Let  $\rho': G_{\Sigma'} \to \GL_2(\bfF_{\ell}[X]/X^2)$ be a representation such that 
 \begin{itemize}
\item [(i)] $\rho'$ is Fontaine-Laffaille; 
\item [(ii)] $\det \rho'=\ov{\epsilon}$;
\item [(iii)] $\tr \rho' = \chi_1 + \chi_2$ for some Galois characters $\chi_1, \chi_2: G_{\Sigma'} \to (\bfF_{\ell}[X]/X^2)^{\times}$ with $\chi_1\equiv 1$ mod $X$ and $\chi_2\equiv \ov{\epsilon}$ mod $X$;
\item[(iv)]  $\rho' \equiv \ov{\rho}$ mod $X$.
\end{itemize}
Then $\rho'$ is isomorphic to $\ov{\rho}$ viewed as an $\bfF_{\ell}[X]/X^2[G_{\Sigma'}]$-module via the natural inclusion $\GL_2(\bfF_{\ell}) \hookrightarrow \GL_2(\bfF_{\ell}[X]/X^2)$.\end{prop}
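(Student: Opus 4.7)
The plan is to reduce the statement to a Selmer group computation, paralleling the approach of Proposition \ref{main}, but now working with the infinitesimal coefficient ring $R := \bfF_{\ell}[X]/X^2$ instead of $\bfZ/\ell^m$. First I would invoke the theorem of Urban (cited already in the discussion preceding Proposition \ref{main}) to conjugate $\rho'$ into an upper-triangular form
\[
\rho' = \begin{pmatrix} \chi_1 & c \\ 0 & \chi_2 \end{pmatrix}.
\]
The argument of Lemma \ref{4} (i.e.\ the analogue of Proposition 9.5 of \cite{BergerKlosin13} showing that the trivial character has no non-trivial Fontaine–Laffaille infinitesimal deformations unramified outside $\Sigma'$) forces $\chi_1 = 1$, and condition (ii) then pins down $\chi_2 = \ov{\epsilon}$.

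Next I would interpret $\rho'$ as an extension of $R(\ov{\epsilon})$ by $R(1)$, giving a cohomology class
\[
[c] \in H^1\bigl(G_{\Sigma'}, R(\ov{\epsilon}^{-1})\bigr)
\]
lying in the Selmer subgroup $\mS^{\Sigma}(\bfQ, R(\ov{\epsilon}^{-1}))$ cut out by the Fontaine–Laffaille condition at $\ell$ and the unramified condition outside $\Sigma'$. Because the Galois action on the coefficients factors through the scalar subring $\bfF_{\ell}\subset R$, there is a $G_{\Sigma'}$-equivariant splitting $R(\ov{\epsilon}^{-1}) \cong W_1 \oplus X\cdot W_1$ of $\bfF_{\ell}[G_{\Sigma'}]$-modules, with $W_1 = \bfF_{\ell}(\ov{\epsilon}^{-1})$ as in Section \ref{The Theorem}. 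Since Fontaine–Laffaille and unramified conditions are $\bfF_\ell$-linear and stable under direct sums, this splitting descends to the Selmer subgroup, yielding an isomorphism of $R$-modules
\[
\mS^{\Sigma}(\bfQ, R(\ov{\epsilon}^{-1})) \;\cong\; R \otimes_{\bfF_{\ell}} \mS^{\Sigma}(\bfQ, W_1).
\]
By Lemma \ref{cycl}, the right-hand side is a free $R$-module of rank one, generated by the class $[c_0]$ of the extension $\ov{\rho}$.

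Consequently, $[c] = r\,[c_0]$ for some $r \in R$. Reducing modulo $X$ recovers the class of $\rho' \bmod X = \ov{\rho}$, which is $[c_0]$; since $[c_0]$ is non-zero in $\mS^{\Sigma}(\bfQ, W_1)$, this forces $r \equiv 1 \pmod{X}$, so $r \in R^{\times}$. A change of basis of the form $(e_1, e_2) \mapsto (e_1, r\,e_2)$ then rescales the cocycle $r c_0$ (cohomologous to $c$) back to $c_0$, producing an explicit $R[G_{\Sigma'}]$-isomorphism $\rho' \xrightarrow{\sim} \ov{\rho}\otimes_{\bfF_{\ell}} R$.

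The main technical point to be careful with is the second step: verifying that the splitting $R(\ov{\epsilon}^{-1}) = W_1 \oplus X W_1$ is genuinely respected by the Fontaine–Laffaille condition at $\ell$ for the $R$-coefficient theory (so that the Selmer group truly decomposes as claimed). This is essentially formal from exactness of the Fontaine–Laffaille functor, but it is the one place where one must check that the conditions used in defining $\mS^{\Sigma}$ behave well under extension of the coefficient ring from $\bfF_{\ell}$ to $R$.
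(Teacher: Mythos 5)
Your proof is correct and follows essentially the same route as the paper: after using Urban's theorem and the rigidity of the characters $1$ and $\ov\epsilon$ to force $\chi_1=1$, $\chi_2=\ov\epsilon$, the decisive input in both arguments is the one-dimensionality of $\mS^{\Sigma}(\bfQ,W_1)$ from Lemma \ref{cycl}. The only difference is cosmetic: the paper delegates the final step to Proposition 7.2 of \cite{BergerKlosin13} (with Lemma \ref{cycl} supplying its Assumption 6(i)), whereas you unpack that citation by explicitly decomposing $\mS^{\Sigma}(\bfQ,\bfF_\ell[X]/X^2(\ov\epsilon^{-1}))\cong \bfF_\ell[X]/X^2\otimes_{\bfF_\ell}\mS^{\Sigma}(\bfQ,W_1)$ and rescaling the cocycle by a unit.
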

\begin{proof} Using again the main theorem of \cite{Urban99} we conclude that $\rho'$ can be conjugated to a representation of the form $\bmat \chi_1&* \\ & \chi_2\emat.$ Hence $\chi_1$ and $\chi_2$ as subquotients of $\rho'$ are also Fontaine-Laffaille. Again arguing as in the proof of Proposition 9.5 in \cite{BergerKlosin13} we get that  $1$ and $\ov{\epsilon}$ do not admit any non-trivial infinitesimal Fontaine-Laffaille deformations, so we must have $\chi_1=1$ and $\chi_2=\ov{\epsilon}$. 
 This puts us in the setup of section 6 of \cite{BergerKlosin13} with Assumption 6(ii) satisfied. Hence the claim follows from Proposition 7.2 of \cite{BergerKlosin13}, using Lemma \ref{cycl} above to see that Assumption 6(i) is also satisfied. 
\end{proof}

Let $\mL$ be the category of local complete Noetherian $\bfZ_{\ell}$-algebras with residue field $\bfF_{\ell}$. Consider  deformations $\rho': G_{\Sigma'} \to \GL_2(A)$ of $\ov{\rho}$ for $A$ an object of $\mL$ which are such that:
\begin{itemize}
\item $\det \rho'=\epsilon$;
\item $\rho'$ is Fontaine-Laffaille at $\ell$.
\end{itemize}
Since $\ov{\rho}$ has scalar centralizer the above deformation problem is representable (cf. \cite{Ramakrishna93}, p. 270) by a universal deformation ring $R$. We write $\sigma: G_{\Sigma'} \to \GL_2(R)$ for the universal deformation. 

Let $I$ be the ideal of reducibility of the universal deformation $\sigma$, i.e., $I$ is the smallest ideal $I' \subset R$ such that $\tr \sigma$ is a sum of characters $\chi_1$ and $\chi_2$ mod $I'$ with the property that $\chi_1$ reduces to 1 and $\chi_2$ reduces to $\ov{\epsilon}$ modulo the maximal ideal $\fm_R$ of $R$. 
\begin{thm} \label{main3} Suppose $\val_{\ell}(\ell_1^2-1)=1$ and $\val_{\ell}(\ell_i^2-1)=0$ for all $i=2,3,\dots, k$. 
Then $I=\fm_R$. \end{thm}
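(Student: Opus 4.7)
The plan is to prove $R/I = \bfF_{\ell}$; combined with the automatic inclusion $I \subseteq \fm_R$ (valid because $\ov{\rho}$ itself has trace $1 + \ov{\epsilon}$, so $\fm_R$ is one admissible choice in the definition of $I$), this yields $I = \fm_R$. Since $R/I$ is a complete local Noetherian $\bfZ_{\ell}$-algebra with residue field $\bfF_{\ell}$, Nakayama's lemma reduces the task to showing that the cotangent space $\fm_{R/I}/\fm_{R/I}^2$ vanishes; dually, it suffices to show that every continuous $\bfZ_{\ell}$-algebra homomorphism $R/I \to \bfF_{\ell}[X]/X^2$ compatible with the residue-field projections must factor through the reduction $\bfF_{\ell}[X]/X^2 \twoheadrightarrow \bfF_{\ell}$.

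By the universal property of $R$ together with the defining property of the reducibility ideal $I$, such a homomorphism corresponds to a strict equivalence class of deformations $\rho' \colon G_{\Sigma'} \to \GL_2(\bfF_{\ell}[X]/X^2)$ of $\ov{\rho}$ that is Fontaine-Laffaille at $\ell$, has determinant $\ov{\epsilon}$ (the image of $\epsilon$ in $(\bfF_{\ell}[X]/X^2)^{\times}$), and has trace of the form $\chi_1+\chi_2$ with $\chi_1\equiv 1$ and $\chi_2\equiv \ov{\epsilon}$ modulo $X$. These are precisely the hypotheses (i)--(iv) of Proposition \ref{no inf}, whose conclusion is that $\rho'$ is isomorphic to $\ov{\rho}$ viewed as an $\bfF_{\ell}[X]/X^2[G_{\Sigma'}]$-module via the scalar inclusion $\GL_2(\bfF_{\ell}) \hookrightarrow \GL_2(\bfF_{\ell}[X]/X^2)$. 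This is precisely the trivial deformation, corresponding to the zero tangent vector.

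Consequently $\fm_{R/I}=\fm_{R/I}^2$, and Nakayama's lemma forces $\fm_{R/I}=0$, so $R/I=\bfF_{\ell}$ and $I=\fm_R$. The heart of the argument is not in Theorem \ref{main3} itself but in Proposition \ref{no inf}: once the nonexistence of nontrivial trace-reducible Fontaine-Laffaille infinitesimal deformations is in hand --- itself resting on the Selmer-group input of Lemma \ref{cycl} and the Herbrand-type vanishing of Proposition \ref{prop1} --- the passage to $I=\fm_R$ is a purely formal deformation-theoretic Nakayama argument with no remaining arithmetic obstacle. (Proposition \ref{main} proper, handling deformations to $\bfZ/\ell^m$ for general $m$, is not needed here; it will play its role in the mod-$\ell^m$ refinement stated as Corollary \ref{main4}.)
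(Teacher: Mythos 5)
There is a genuine gap in the Nakayama step, and it is exactly the gap that Proposition~\ref{main} is designed to close. The set of local $\bfZ_\ell$-algebra homomorphisms $R/I\to\bfF_\ell[X]/X^2$ is dual to the \emph{relative} cotangent space
\[
\fm_{R/I}\big/\bigl(\fm_{R/I}^2+\ell\,(R/I)\bigr),
\]
not to $\fm_{R/I}/\fm_{R/I}^2$: any such map kills $\ell$ because $\ell=0$ in $\bfF_\ell[X]/X^2$. So from Proposition~\ref{no inf} you may only conclude $\fm_{R/I}=\fm_{R/I}^2+\ell\,(R/I)$, and Nakayama then gives $\fm_{R/I}=\ell\,(R/I)$, i.e.\ $R/I$ is a quotient of $\bfZ_\ell$. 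It does \emph{not} force $\fm_{R/I}=0$. Concretely, $R/I=\bfZ/\ell^2$ (or $\bfZ_\ell$) has $\fm_{R/I}/\fm_{R/I}^2\cong\bfF_\ell\neq 0$ yet admits no surjective local $\bfZ_\ell$-algebra map to $\bfF_\ell[X]/X^2$, so your ``dual'' reformulation of the cotangent-space vanishing is false. Your parenthetical remark that Proposition~\ref{main} is ``not needed here'' is therefore incorrect.

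The paper's proof closes this gap by invoking both Proposition~\ref{no inf} (no surjection $R/I\twoheadrightarrow\bfF_\ell[X]/X^2$) \emph{and} Proposition~\ref{main} with $m=2$ (no surjection $R/I\twoheadrightarrow\bfZ/\ell^2$), and then applies Lemma~3.5 of \cite{Calegari06}, which says precisely that a complete local Noetherian $\bfZ_\ell$-algebra with residue field $\bfF_\ell$ that surjects onto neither $\bfF_\ell[X]/X^2$ nor $\bfZ/\ell^2$ must equal $\bfF_\ell$. To repair your argument, after the $\bfF_\ell[X]/X^2$ step you must additionally rule out a surjection $R/I\twoheadrightarrow\bfZ/\ell^2$; by universality of $R$ and the definition of $I$ such a surjection would produce a deformation $\rho_2\colon G_{\Sigma'}\to\GL_2(\bfZ/\ell^2)$ satisfying hypotheses (i)--(iv) of Proposition~\ref{main} with $m=2$ (note that under the hypothesis $\val_\ell(\ell_1^2-1)=1$ that proposition applies exactly with $m=2$), contradicting that proposition. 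Only then does the two-pronged Nakayama-type argument (Calegari's lemma) give $R/I=\bfF_\ell$.
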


\begin{proof} 
It follows from Proposition \ref{main} (and universality of $R$) that $R/I$ does not admit a surjection to $\bfZ/\ell^2$. Similarly it follows from Proposition \ref{no inf} that $R/I$ does not admit a surjection to $\bfF[X]/X^2$. Thus $I$ is the maximal ideal by Lemma 3.5 in \cite{Calegari06}.   \end{proof}
Let us explain one consequence of Theorem \ref{main3}. 
If $A$ is any object in $\mL$ and $\rho: G_{\Sigma'} \to \GL_2(A)$ is a continuous representation such that  \begin{itemize}
\item [(i)] $\rho$ is   Fontaine-Laffaille; 
\item [(ii)] $\det \rho=\epsilon$;
\item [(iii)] $\tr \rho = \chi_1 + \chi_2$ for some Galois characters $\chi_1, \chi_2: G_{\Sigma'} \to A^{\times}$ with $\chi_1\equiv 1$ mod $\fm_A$ and $\chi_2\equiv \ov{\epsilon}$ mod $\fm_A$;
\item[(iv)]  $\rho = \ov{\rho}$ mod $\fm_A$,
\end{itemize} then the $\bfZ_{\ell}$-algebra map $\phi: R\to A$ whose existence follows from universality of $R$ factors through (by the definition of $I$) a $\bfZ_{\ell}$-algebra map $R\twoheadrightarrow \bfF_{\ell}=R/I \xrightarrow{\phi} A$ such that $\rho$ is isomorphic to $\ov{\rho}$ viewed   as a $A[G_{\Sigma'}]$-module via $\phi$.

\begin{cor} \label{main4} Let $k=2$. Suppose $\val_{\ell}(\ell_1^2-1)=1$ and $\val_{\ell}(\ell_2^2-1)=0$. Let $\bfT$ be the Hecke algebra as in section \ref{Intro} and $\fm$ a maximal Eisenstein ideal as in Theorem \ref{thmintro}. Then there does not exist a Galois representation $\rho: G_{\Sigma'} \to \GL_2(\bfT/\fm^2)$ such that $\rho$ satisfies (i)-(iv) as above with $A=\bfT/\fm^2$. \end{cor}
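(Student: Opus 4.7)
The strategy is to apply Theorem \ref{main3} and universal deformation theory, then reduce to Proposition \ref{main}. Suppose for contradiction a representation $\rho:G_{\Sigma'}\to\GL_2(A)$ with $A=\bfT/\fm^2$ satisfying conditions (i)-(iv) exists. Yoo's multiplicity-one result ($\dim_{\bfF_\ell}J[\fm]=2$) implies $\bfT/\fm\cong\bfF_\ell$, so $A$ is a finite local $\bfZ_\ell$-algebra with residue field $\bfF_\ell$ and thus $A\in\mL$. By universality of $R$ there is a unique $\bfZ_\ell$-algebra homomorphism $\phi:R\to A$ with $\rho\cong\phi_*\sigma$; condition (iii) yields $\phi(I)=0$ by the defining property of the reducibility ideal, and Theorem \ref{main3} identifies $I=\fm_R$, so $\phi$ factors as a $\bfZ_\ell$-algebra map through $R/\fm_R=\bfF_\ell$. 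Since $\ell\in\fm_R$, this forces $\ell=0$ in $A$, that is, $\ell\in\fm^2$ in $\bfT$.

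To contradict $\ell\in\fm^2$, the plan is to exhibit a surjective $\bfZ_\ell$-algebra map $\pi:\bfT/\fm^2\twoheadrightarrow\bfZ/\ell^2$: any such $\pi$ sends $\ell$ to $\ell\ne 0$ in $\bfZ/\ell^2$, directly contradicting $\ell\in\fm^2$. Equivalently and somewhat more cleanly, composing gives $\bar\rho:=\pi\circ\rho:G_{\Sigma'}\to\GL_2(\bfZ/\ell^2)$, which inherits all four hypotheses of Proposition \ref{main} with $m=2$ — Fontaine-Laffaille is preserved under base change, the determinant condition is transported since $\pi$ is a $\bfZ_\ell$-algebra map, and the trace and residual conditions are preserved because $\pi$ sends $\fm$ into $\ell\bfZ/\ell^2$. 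Note that the numerical conditions $\val_\ell(\ell_1^2-1)=1$ and $\val_\ell(\ell_2^2-1)=0$ of the corollary are exactly those of Proposition \ref{main} when $m=2$ and $k=2$, so Proposition \ref{main} forbids the existence of such $\bar\rho$.

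The main obstacle is the construction of $\pi$. Concretely, $\pi$ amounts to a $\bfZ_\ell$-valued Hecke eigensystem $\epsilon_1:\bfT\to\bfZ_\ell$ whose mod-$\ell$ reduction is the Eisenstein character defining $\fm$. Given such $\epsilon_1$, one checks directly that it sends each generator $T_p\pm 1$, $T_q\mp 1$, $T_r-(r+1)$ (for primes $r\nmid N\ell$), and $\ell$ of $\fm$ into $\ell\bfZ_\ell$, so that $\epsilon_1(\fm^2)\subset\ell^2\bfZ_\ell$ and $\epsilon_1$ descends modulo $\ell^2$ to the required surjection $\pi$. The existence of $\epsilon_1$ is the crux: it corresponds to a $\bfQ_\ell$-rational Hecke eigensystem at level $N=pq$ congruent modulo $\ell$ to the Eisenstein system labelled by $\fm$, and should be obtained via Ribet-style Eisenstein-cuspidal level raising combined with the structural description of $\bfT/\mE$ from \cite{ChuaLing97,Ohta14,Yoo16}.
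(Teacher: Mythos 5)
Your argument diverges from the paper's in a way that leaves a genuine gap. The paper's proof spends most of its effort establishing that the map $\phi\colon R\to\bfT_\fm$ (and hence $R\to\bfT/\fm^2$) is \emph{surjective}; this uses the Wake--Wang--Erickson result that $\bfT_\fm$ is generated by the operators $T_r$ with $r\nmid\ell\ell_1\ell_2$, combined with the fact that $R$ is generated by the traces $\tr\sigma(\Frob_r)$ (which follows from cyclicity of $R/I$, i.e., Theorem~\ref{main3}, as in Proposition~7.13 of \cite{BergerKlosin13}). With surjectivity in hand, $\phi$ factoring through $R/I\cong\bfF_\ell$ forces $\bfT/\fm^2\cong\bfF_\ell$, that is $\fm=\fm^2$, which is absurd since $\bfT_\fm$ is a nonzero finite flat $\bfZ_\ell$-algebra. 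You skip the surjectivity step entirely, and consequently you only obtain the weaker conclusion $\ell\in\fm^2$. (Your factorization of $\phi$ through $R/I$ without surjectivity can in fact be justified --- the characters $\chi_1,\chi_2$ automatically take values in $\mathrm{im}(\phi)$ by a Hensel argument on the quadratic $T^2-(\tr\rho)T+\det\rho$, using $\chi_1\not\equiv\chi_2$ mod $\fm_A$ --- but you do not argue this, and in any case it is not where the real difficulty lies.)

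The difficulty is that $\ell\in\fm^2$ is not obviously contradictory: if $\bfT_\fm$ is ramified over $\bfZ_\ell$ (e.g.\ $\bfT_\fm\cong\bfZ_\ell[X]/(X^2-\ell)$, with $\fm\bfT_\fm=(X)$ and $\fm^2\bfT_\fm=(\ell)$), then $\ell\in\fm^2$ holds. Your proposed remedy --- exhibiting a surjection $\pi\colon\bfT/\fm^2\twoheadrightarrow\bfZ/\ell^2$, which would indeed yield a contradiction --- requires a $\bfZ_\ell$-algebra map $\bfT_\fm\to\bfZ_\ell$, i.e.\ a $\bfZ_\ell$-rational cuspidal Hecke eigensystem congruent to Eisenstein. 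That existence is not established: Eisenstein--cuspidal congruence/level-raising results produce an eigensystem with values in $\ov{\bfZ}_\ell$, not necessarily in $\bfZ_\ell$, and precisely in the ramified scenario above no such $\bfZ_\ell$-point exists. In that scenario $\bfT/\fm^2\cong\bfF_\ell[X]/X^2$ and no surjection onto $\bfZ/\ell^2$ can exist; so the plan to build $\pi$ is essentially equivalent to the claim $\ell\notin\fm^2$ that it is meant to prove. You flag the construction of the eigensystem as ``the crux'' to be ``obtained via Ribet-style level raising,'' but this is the missing ingredient, and it is not implied by the references cited. The paper avoids this issue altogether by proving surjectivity of $R\to\bfT_\fm$; some structural input on $\bfT_\fm$ (here Wake--Wang--Erickson) appears unavoidable either way.
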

\begin{proof} Suppose $\rho$ as in the statement exists. Note that $\bfT/\fm^2$ is an object of $\mL$. Then by universality of $R$ we get a $\bfZ_{\ell}$-algebra map $\phi: R \to \bfT/\fm^2$. Let us first see that this map is surjective. Indeed, viewing $\bfT$ as the Hecke algebra acting on the space of weight 2 cusp forms of level $\Gamma_0(\ell_1\ell_2)$ we first complete it at the ideal $\fm$ and note that $\bfT_{\fm}$ is an element of $\mL$ (since $\bfT_{\fm}/\fm \bfT_{\fm} = \bfT/\fm \bfT=\bfF_{\ell}$). For every minimal prime $\fP$ of $\bfT_{\fm}$ we have a canonical map $\bfT_{\fm} \twoheadrightarrow \bfT_{\fm}/\fP$ given by sending operators $T_{r}$ and $U_r$ to the eigenvalues of the corresponding cusp form. It follows  from Proposition A.2.3 and A.2.2(2) in \cite{WakeWangErickson18preprint} that the algebra $\bfT_{\fm}$ is generated by   the operators $T_r$ for $r \nmid \ell\ell_1\ell_2$. Indeed, our assumptions on the valuations of the $\ell_i$ imply that the Atkin-Lehner signature denoted in  \cite{WakeWangErickson18preprint} by $\epsilon$ equals $(-1,1)$ - this is forced by the condition that the constant term of the relevant Eisenstein series (cf.  equation (1.3.1) in \cite{WakeWangErickson18preprint}) vanishes modulo $\ell$. In other words our Hecke algebra $\bfT_{\fm}$ equals the Hecke algebra denoted in  \cite{WakeWangErickson18preprint} by $\mathbb{T}_U^{(-1,1),0}$, which in turn equals $\mathbb{T}^{(-1,1), 0}$ by Proposition A.2.3 in  \cite{WakeWangErickson18preprint}. It then follows from Proposition A.2.2 that this last Hecke algebra is generated by $T_r$ for $r \nmid \ell\ell_1\ell_2$. Thus the intersection of all the minimal primes  $\bigcap_{\fP} \fP$ equals 0 as it consists of all the operators $T$ such that $Tf=0$ for all eigenforms $f$ of $\bfT_{\fm}$. Hence in particular $\bfT_{\fm}$ injects into $\prod_{\fP} \bfT_{\fm}/\fP=\tilde{\bfT}_{\fm}$, where $\tilde{\bfT}_{\fm}$ is the normalization of $\bfT_{\fm}$. 

We claim that the combined map $R \to \prod_{\fP} \bfT_{\fm}/\fP=\tilde{\bfT}_{\fm} \supset \bfT_{\fm}$ surjects onto $\bfT_{\fm}$.  This is a standard argument, which we summarize here in our situation.
 First arguing as in the proof of Proposition 7.13 in \cite{BergerKlosin13} using Theorem \ref{main3} above for the cyclicity of $R/I$ we conclude that $R$ is generated by the set $\{\tr \sigma(\Frob_r) \mid r \not \in \Sigma'\}$. Since each of these traces is mapped to $T_r$ under the map $R \to \tilde{\bfT}_{\fm}$ we see that the image is contained in $\bfT_{\fm}$. In fact, it equals $\bfT_{\fm}$ as we showed above that $\bfT_{\fm}$ is generated by $T_{r}$ with $r \nmid \ell\ell_1\ell_2$. 

Having established the surjectivity of $R \to \bfT_{\fm}$ we now  use (iii) above and the definition of $I$ to conclude that the induced surjection $\phi: R \twoheadrightarrow \bfT/\fm^2$ factors through a $\bfZ_{\ell}$-algebra map $R\twoheadrightarrow R/I \xrightarrow{\phi} \bfT/\fm^2$. However, $R/I \cong \bfF_{\ell}$ by Theorem \ref{main3} implying that $\fm=\fm^2$, which is absurd. 
\end{proof}



\section{Character groups of $J_0(65)$ as Hecke modules}\label{Matrices} 

In this section $J:=J_0(65)$. In this case, $J=J^{\rm new}$. 
Let $M_p$ denote the character group of $J$ at $p$ as defined in the introduction. 
For $p=5, 13$, $M_p$ is a free abelian group of rank $\dim(J)=5$. 
By the N\'eron mapping property, 
the action of the Hecke algebra $\bfT$ on $J$ extends canonically to an action on the N\'eron model $\mJ$ of $J$ over $\bfZ_p$. For $p=5, 13$, 
$\bfT$ acts faithfully on $\mJ_{\bfF_p}^0$, and hence 
also on $M_p$ (because $J$ has purely toric reduction at $p$). The main result of this section is the fact that 
$M_5$ and $M_{13}$ are isomorphic as $\T$-modules. The proof is based on explicit calculations with Brandt matrices; cf. \cite{Gross87}. 

\begin{rem}\label{rem3.1}
The algebra $\bfT\otimes\bfQ$ is semi-simple of dimension $5$ over $\bfQ$. 
Since $\bfT\otimes\bfQ$ acts faithfully on $M_p\otimes\bfQ$, $p=5, 13$, which is also $5$-dimensional over $\bfQ$, one easily concludes 
that $M_p\otimes\bfQ$ is free over $\bfT\otimes\bfQ$ of rank $1$. Thus, $M_5\otimes \bfQ\cong M_{13}\otimes \bfQ$ as $\T$-modules, 
but the isomorphism over $\bfZ$ is more subtle. 
\end{rem}

\begin{prop}\label{prop3.2} There are isomorphisms of $\bfT$-modules 
$M_5\cong M_{13}\cong \bfT$. 
\end{prop}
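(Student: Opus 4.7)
The plan is to use the Deligne–Rapoport/Grothendieck description of the character group at a prime of bad multiplicative reduction, combined with Ribet's identification in the quaternion setting, to realize each $M_p$ as a Brandt module, and then to exhibit a cyclic $\bfT$-module generator by an explicit computation. Concretely, for $p\in\{5,13\}$ and $q=65/p$, let $B_p$ be the definite quaternion algebra over $\bfQ$ ramified precisely at $p$ and $\infty$, and let $R_p\subset B_p$ be an Eichler order of level $q$. Write $X_p$ for the finite set of right ideal classes of $R_p$, and let $\bfZ[X_p]^0\subset \bfZ[X_p]$ denote the degree-zero submodule. By the monodromy/character-group formula applied to the Cerednik--Drinfeld style description of $X_0(pq)_{\bfF_p}$ (or directly by \cite{Ribet90}, Thm.~4.1, in combination with the duality between character groups and the groups of connected components), there is a canonical $\bfT$-equivariant isomorphism $M_p\cong \bfZ[X_p]^0$, under which $T_\ell$ (for $\ell\nmid 65$) acts by the classical Brandt matrix $B_p(\ell)$ attached to $R_p$.

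Next, I would compute both sides explicitly. Standard class-number formulas (e.g.\ Eichler's mass formula, or Magma/Pari routines as in Pizer or Kirschmer--Voight) give $\#X_5=\#X_{13}=6$, so $M_p$ has rank $5$, consistent with $\dim J=5$. One then computes the Brandt matrices $B_p(\ell)$ for a handful of small primes $\ell$ (say $\ell=2,3,7,11,17,\ldots$) — this reduces to enumerating representatives of right ideals and counting ideal-theoretic neighbours. Taking any spanning set of primes, the subalgebra of $\mathrm{End}_{\bfZ}(M_p)$ generated by the $B_p(\ell)$ is (by construction) the image of $\bfT$ in $\mathrm{End}(M_p)$, and by Remark \ref{rem3.1} this image is a $\bfZ$-order of rank $5$ isomorphic to $\bfT$.

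To upgrade the rational isomorphism of Remark \ref{rem3.1} to an isomorphism over $\bfZ$, the plan is to exhibit an element $v_p\in M_p$ such that the map
\[
\bfT\longrightarrow M_p,\qquad T\longmapsto T\cdot v_p,
\]
is surjective; since both source and target are free $\bfZ$-modules of rank $5$, surjectivity forces it to be an isomorphism of $\bfT$-modules. In practice one takes $v_p$ to be a simple divisor such as $[I_1]-[I_2]$ for two distinct ideal classes, forms the $5\times k$ integer matrix with columns $T_{\ell_1}v_p,T_{\ell_2}v_p,\ldots$ expressed in the natural $\bfZ$-basis of $\bfZ[X_p]^0$, and verifies that its Smith normal form is $(1,1,1,1,1)$, equivalently that some $5\times 5$ minor equals $\pm 1$. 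Doing this for $p=5$ and $p=13$ separately yields the two isomorphisms $M_5\cong\bfT\cong M_{13}$.

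The main obstacle is the last step: going from the rational statement of Remark \ref{rem3.1} (which is automatic once one knows $\bfT\otimes\bfQ$ is semisimple of dimension $5$) to an integral cyclic-generator statement. This is where the arithmetic of the Hecke algebra genuinely enters — the existence of a $\bfZ$-cyclic generator is a strong integral property, closely related to Mazur's Gorenstein-type results on $\bfT$ away from Eisenstein primes and to the congruences between the three Galois orbits of newforms of level $65$. Fortunately, in this small case it can be verified directly from the Brandt matrices by a finite integer-determinant computation, and Remark \ref{rem3.1} guarantees the computation cannot fail rationally, so one only needs to find a lucky $v_p$ and check that the resulting determinant is a unit in $\bfZ$.
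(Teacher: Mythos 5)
Your proposal is correct and takes essentially the same approach as the paper: realize each character group $M_p$ as the degree-zero part of a Brandt module for the Eichler order of level $q$ in the definite quaternion algebra ramified at $\{p,\infty\}$, compute a few Hecke/Brandt matrices explicitly, and verify by an integer determinant (or Smith normal form) that a suitable $v\in M_p$ generates it as a $\bfT$-module, which together with the rank count forces $\bfT\xrightarrow{\sim} M_p$. The only cosmetic difference is that the paper pins down in advance the $\bfZ$-module generators $T_1,T_2,T_3,T_5,T_{11}$ of $\bfT$ (from their earlier work on level $65$) to know exactly which five Brandt matrices to compute, while you propose computing an arbitrary collection of small-prime Hecke operators and checking that the resulting $5\times k$ matrix already has unimodular image — logically equivalent, since surjectivity of $T\mapsto Tv$ is all that is needed.
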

\begin{proof}
The following \texttt{Magma} routine computes the action of $T_n$ on $M_5$ for a given positive integer $n$:\\
\texttt{
> B5:= BrandtModule(5, 13); \\
> M5:= CuspidalSubspace(B);\\
> Sn:=HeckeOperator(M5, n);\\
}
The result is an explicit matrix $S_n\in M_5(\bfZ)$. Repeating the same process with the roles of $5$ and $13$ interchanged, we 
get another matrix $S_n'\in M_5(\bfZ)$ by which $T_n$ acts on $M_{13}$ (with respect to implicit $\bfZ$-bases chosen by the program). 

A calculation with discriminants shows that 
$\bfT$, as a free $\bfZ$-module of rank $5$, is generated by the Hecke operators $T_1, T_2, T_3, T_5, T_{11}$; cf. \cite[Sec. 3]{KlosinPapikian18}. 
We have 
$$
S_1=\begin{bmatrix*}[r]
1 & 0 & 0 & 0 & 0 \\ 
0 & 1 & 0 & 0 & 0 \\ 
0 & 0 & 1 & 0 & 0 \\ 
0 & 0 & 0 & 1 & 0 \\ 
0 & 0 & 0 & 0 & 1
\end{bmatrix*}, \quad 
S_2=\begin{bmatrix*}[r]
-1 & -1 & 1 & 0 & 0 \\ 
-1 & -1 & 0 & 1 & 0\\
2 & -1 & 0 & 0 & -1\\
-1 & 2 & 0 & 0 & -1\\
0 & 0 & 0 & 0 &-1
\end{bmatrix*}, 
$$
$$
S_3=\begin{bmatrix*}[r]
0 &-1 & 0 & 1 & -1 \\
-1 & 0 & 1 & 0 & -1\\
-1 & 2 & 1 & 0 & -2\\
2 & -1 & 0 & 1 & -2\\ 
0 & 0 & 0 & 0 & -2
\end{bmatrix*}, \quad 
S_5=\begin{bmatrix*}[r]
0 & 1 & 0 & 0 & -1 \\ 
1 & 0 & 0 & 0 & -1\\
0 & 0 & 0 & 1 & -1\\
0 & 0 & 1 & 0 & -1\\
0 & 0 & 0 & 0 & -1
\end{bmatrix*}, 
$$
$$
S_{11}=\begin{bmatrix*}[r]
0 & 3 & 0 & -1 & 0\\
3 & 0 & -1 & 0 & 0 \\
1 & -2 & -1 & 2 & 1 \\
-2 & 1 & 2 & -1 & 1\\
0 & 0 & 0 & 0 & 2
\end{bmatrix*}.
$$
In \texttt{Magma}, the action of Hecke operators on $M_p$ is defined to be from the right, i.e., as on row vectors. 
Let $v=[1, 0, 0,0,0] \in M_5$, and 
$$
A:=\begin{bmatrix*}
vS_1\\
vS_2\\
vS_3\\
vS_5\\
vS_{11}
\end{bmatrix*}
=
\begin{bmatrix*}[r]
1 & 0 & 0 & 0 & 0\\
-1 & -1 & 1 & 0 & 0\\
0 & -1 & 0 & 1 & -1\\
0 & 1 & 0 & 0 & -1\\
0 & 3 & 0 & -1 & 0
\end{bmatrix*}. 
$$
One easily verifies that $\det(A)=1$, hence 
$$
M_5=\bfZ vS_1+\bfZ vS_2+\bfZ vS_3+\bfZ vS_5+\bfZ vS_{11}= v\bfT. 
$$
Thus, $M_5\cong \bfT$ is a free $\bfT$-module of rank $1$. A similar calculation with $M_{13}$, gives 
$$
A':=\begin{bmatrix*}
vS_1'\\
vS_2'\\
vS_3'\\
vS_5'\\
vS_{11}'
\end{bmatrix*}
=
\begin{bmatrix*}[r]
1& 0 & 0 & 0 & 0\\
0 & 2 & 0 & -1 & -1\\
-1 & 0 & 1 & 0 & -1\\
0 & -1 & 0 & 0 & 0\\
1 & -2 & -1 & 0 & 2
\end{bmatrix*}. 
$$
In this case, $\det(A)=-1$, hence again $M_{13}=v \bfT$. 
\end{proof}

\begin{rem}
The fact that $M_5$ and $M_{13}$ are free $\bfT$-modules is a coincidence (a priori, we don't see a reason for this to happen). 
To emphasize this point, we note that the dual $M_5^\ast=\Hom(M_5, \bfZ)$ of $M_5$ 
with induces action of $\T$ is not a free $\bfT$-module. 
(On $M_5^\ast$ the Hecke operator $T_n$ acts by the transpose of the matrix by which it acts on $M_5$). 
Indeed, otherwise we get $\bfT\cong \Hom(\bfT, \bfZ)$, which implies that the localization of $\bfT$ at any maximal ideal is Gorenstein 
in contradiction to \cite[Prop. 3.7]{KlosinPapikian18}. 
\end{rem}

\begin{rem} The proof of Proposition \ref{prop3.2} is rather ad hoc. 
Suppose more generally that we are given two $\bfT$-modules $M, M'$ for a Hecke algebra of some level $N$ such that $M, M'$ 
are free of the same finite rank over $\bfZ$ and 
$M\otimes_\bfZ\bfQ\cong_{\bfT} M'\otimes_\bfZ\bfQ$. Also, suppose we are 
able to compute efficiently the matrices $S_n, S_n'$ by which $T_n$ acts on $M$ and $M'$, respectively. 
The question of the integral isomorphism $M\cong_{\bfT(N)} M'$ is equivalent to the existence of 
an invertible matrix $S\in \GL_r(\bfZ)$ such that 
$S S_nS^{-1}=S_n'$ for all $n\geq 1$; here $r=\mathrm{rank}_\bfZ(M)$. In fact, 
it is enough to find such $S$ that works for all $n$ up to an explicit bound depending on $N$ (the Sturm bound). 
Despite the elementary nature of this question, computationally it is challenging. 
The problem of integral conjugacy of matrices is a classical problem related to class groups of orders in number fields (see \cite{LatimerMacDuffee33}), 
and there are algorithms that solve this problem (see \cite{Sarkisjan79}, \cite{Grunewald80}), but 
currently these algorithms do not seem to be implemented in any of the standard computational programs, such as \texttt{Magma}. 
(Given two $m\times m$ matrices $A$ and $B$ with rational or integral entries, 
\texttt{Magma} currently can test whether $A$ is conjugate to $B$ in $\GL_m(\bfZ)$ 
only if $m=2$.)
\end{rem}

\subsection*{Acknowledgements} We are very grateful to Ken Ribet for suggesting that his construction in \cite{Ribet90Israel} 
might lead to a counterexample to Ogg's conjecture, and for other helpful suggestions about the exposition in an earlier version of this paper. 
We thank Hwajong Yoo for pointing out several misstatements in earlier versions of this paper, 
and for directing us to the reference \cite{YooTAMS}. 
The second author is also grateful to Fu-Tsun Wei for useful discussions related to the topic of this paper.

\bibliographystyle{amsalpha}
\bibliography{RibetOgg}
\end{document}